\newtheorem{thm}{Theorem}
\newtheorem{cor}[thm]{Corollary}
\newtheorem{lem}[thm]{Lemma}
\newtheorem{prop}[thm]{Proposition}
\newtheorem{rem}[thm]{Remark}
\newtheorem{ex}[thm]{Example}
\newtheorem{defi}[thm]{Definition}
\newtheorem{quest}[thm]{Question}
\begin{document}

\title{On a conjecture of A. Bikchentaev}

\author{F. A. Sukochev}

\address{School of Mathematics and Statistics, University of New South Wales, Sydney, NSW 2052, Australia }
\email{f.sukochev@unsw.edu.au}

\subjclass{}

\thanks{The author acknowledges support from the Australian Research Council.}

\keywords{Hardy-Littlewood submajorization, Golden-Thompson
inequalities} \subjclass[2000]{47B10, 46A22}

\begin{abstract} In \cite{bik1}, A. M. Bikchentaev conjectured that for positive
$\tau-$measurable operators $a$ and $b$ affiliated with an
arbitrary semifinite von Neumann algebra $\mathcal M$, the
operator $b^{1/2}ab^{1/2}$ is submajorized by the operator $ab$ in
the sense of Hardy-Littlewood. We prove this conjecture in full
generality and present a number of applications to fully symmetric
operator ideals, Golden-Thompson inequality and (singular) traces.
\end{abstract}

\maketitle

\section{Introduction and preliminaries}


In this paper we answer a question due to A. M. Bikchentaev (see \cite[p. 573, Conjecture A]{bik1}) in the affirmative (see also \cite{bik2,bik3}). To
formulate his conjecture, we need some notions from the theory of
noncommutative integration. For details on von Neumann algebra
theory, the reader is referred to e.g. \cite{Dix}, \cite{KR1, KR2}
or \cite{Tak}. General facts concerning measurable operators may
be found in \cite{Ne}, \cite{Se} (see also \cite[Chapter
IX]{Ta2} and the forthcoming book \cite{DPS}). For the convenience of the reader, some of the basic
definitions are recalled.

In what follows,  $H$ is a Hilbert space and $B(H)$ is the
$*$-algebra of all bounded linear operators on $H$, and
$\mathbf{1}$ is the identity operator on $H$. Let $\mathcal{M}$ be
a von Neumann algebra on $H$.

A linear operator $x:\mathfrak{D}\left( x\right) \rightarrow H $, where the domain $\mathfrak{D}\left( x\right) $ of $x$ is a linear subspace of $H$, is said to be {\it affiliated} with $\mathcal{M}$ if $yx\subseteq xy$ for all $y\in \mathcal{M}^{\prime }$. A linear
operator $x:\mathfrak{D}\left( x\right) \rightarrow H $ is termed {\it measurable} with respect to $\mathcal{M}$ if $x$ is closed, densely defined, affiliated with $\mathcal{M}$ and there exists a sequence $\left\{ p_n\right\}_{n=1}^{\infty}$ in $P\left(\mathcal{M}\right) $ such that $p_n\uparrow 1,$ $p_n(H)\subseteq\mathfrak{D}\left(x\right) $ and
$1-p_n$ is a finite projection (with respect to
$\mathcal{M}$) for all $n$. It should be noted that the condition
$p_{n}\left( H\right) \subseteq \mathfrak{D}\left( x\right) $
implies that $xp_{n}\in \mathcal{M}$. The collection of all
measurable operators with respect to $\mathcal{M}$ is denoted by
$S\left( \mathcal{M} \right) $, which is a unital $\ast $-algebra
with respect to
strong sums and products (denoted simply by $x+y$ and $xy$ for all $x,y\in S\left( \mathcal{M%
}\right) $).

Let $a$ be a self-adjoint operator affiliated with $\mathcal{M}$.
We denote its spectral measure by $\{e^a\}$. It is known that if $x$ is
a closed operator affiliated with $\mathcal{M}$ with the polar decomposition $x = u|x|,$ then $u\in\mathcal{M}$ and $e\in \mathcal{M}$ for all projections $e\in \{e^{|x|}\}$. Moreover,
$x\in S(\mathcal{M})$ if and only if $x$ is closed, densely
defined, affiliated with $\mathcal{M}$ and $e^{|x|}(\lambda,
\infty)$ is a finite projection for some $\lambda> 0$. It follows
immediately that in the case when $\mathcal{M}$ is a von Neumann
algebra of type $III$ or a type $I$ factor, we have
$S(\mathcal{M})= \mathcal{M}$. For type $II$ von Neumann algebras,
this is no longer true. From now on, let $\mathcal{M}$ be a
semifinite von Neumann algebra equipped with a faithful normal
semifinite trace $\tau$ (the reader unfamiliar with von Neumann
algebra theory can assume that $\mathcal{M}=B(H)$ and $\tau$ is
the standard trace on $B(H)$: the A. Bikchentaev's question
retains its interest also in this special case).

An operator $x\in S\left( \mathcal{M}\right) $ is called $\tau-$measurable if there exists a sequence $\left\{p_n\right\}_{n=1}^{\infty}$ in $P\left(\mathcal{M}\right)$ such that $p_n\uparrow 1,$ $p_n\left(H\right)\subseteq \mathfrak{D}\left(x\right)$ and $\tau(1-p_n)<\infty $ for all $n.$ The collection $S\left( \tau \right) $ of all $\tau $-measurable
operators is a unital $\ast $-subalgebra of $S\left(
\mathcal{M}\right) $ denoted by $S\left( \mathcal{M}, \tau\right)
$. It is well known that a linear operator $x$ belongs to $S\left(
\mathcal{M}, \tau\right) $ if and only if $x\in S(\mathcal{M})$
and there exists $\lambda>0$ such that $\tau(e^{|x|}(\lambda,
\infty))<\infty$. Alternatively, an unbounded operator $x$
affiliated with $\mathcal{M}$ is  $\tau-$measurable (see
\cite{FK}) if and only if
$$\tau\left(e^{|x|}(\frac1n,\infty)\right)=o(1),\quad n\to\infty.$$

Let $L_0$ be a space of Lebesgue measurable functions either on
$(0,1)$ or on $(0,\infty)$, or on $\mathbb{N}$ finite almost
everywhere (with identification $m-$a.e.). Here $m$ is Lebesgue
measure or else counting measure on $\mathbb{N}$. Define $S$ as
the subset of $L_0$ which consists of all functions $x$ such that
$m(\{|x|>s\})$ is finite for some $s.$

%

The notation $\mu(x)$ stands for the non-increasing right-continuous
rearrangement of $x\in S$ given by
$$\mu(t;x)=\inf\{s\geq0:\ m(\{|x|\geq s\})\leq t\}.$$
In the case when $x$ is a sequence we denote by $\mu(x)$ the usual
decreasing rearrangement of the sequence $|x|$.

%
%

Let a semifinite von Neumann  algebra $\mathcal M$ be equipped
with a faithful normal semi-finite trace $\tau$. Let $x\in
S(\mathcal{M},\tau)$. The generalized singular value function $\mu(x):t\rightarrow \mu(t;x)$ of
the operator $x$ is defined by setting
$$\mu(s;x)=\inf\{\|xp\|:\ p=p^*\in\mathcal{M}\mbox{ is a projection,}\ \tau(1-p)\leq s\}.$$
There exists an equivalent definition which involves the distribution function of the operator $x.$ For every self-adjoint operator $x\in
S(\mathcal{M},\tau),$ setting
$$d_x(t)=\tau(e^{x}(t,\infty)),\quad t>0,$$
we have (see e.g. \cite{FK})
$$\mu(t;x)=\inf\{s\geq0:\ d_{|x|}(s)\leq t\}.$$

Consider the algebra $\mathcal{M}=L^\infty(0,\infty)$ of all Lebesgue measurable essentially bounded functions on $(0,\infty).$ Algebra $\mathcal{M}$ can be seen as an abelian von Neumann algebra acting via multiplication on the Hilbert space $\mathcal{H}=L^2(0,\infty)$, with the trace given by integration with respect to Lebesgue measure $m.$ It is easy to see that the set of all measurable (respectively, $\tau$-measurable) operators affiliated with $\mathcal{M}$ can be identified with $S$ (respectively, with $L_0$). It should also be pointed out that the generalized singular value function $\mu(f)$ is precisely the decreasing rearrangement
$\mu(f)$ of the function $f$ defined above.

If $\mathcal{M}=B(H)$ (respectively, $l_\infty$) and $\tau$ is the standard trace ${\rm Tr}$ (respectively, the counting measure on $\mathbb{N}$), then it is not difficult to see that $S(\mathcal{M})=S(\mathcal{M},\tau)=\mathcal{M}.$ In this case, for $x\in S(\mathcal{M},\tau)$ we have
$$\mu(n;x)=\mu(t;x),\quad t\in[n,n+1),\quad  n\geq0.$$
The sequence $\{\mu(n;x)\}_{n\geq0}$ is just the sequence of singular values of the operator $x.$

Let $a,b\in S(\mathcal{M},\tau).$ We say that $b$ is submajorized
by $a$ in the sense of Hardy-Littlewood-Polya if and only if
$$\int_0^t\mu(s;b)ds\leq\int_0^t\mu(s;a)ds,\quad\forall t>0.$$
In this case, we write $b\prec\prec a.$ Observe that $b\prec\prec a$ if and only if $\mu(b)\prec\prec\mu(a).$ Sometimes, we also write $a\prec\prec f$ instead of $\mu(a)\prec\prec\mu(f).$

In the special case when $a$ and $b$ are {\it positive}
self-adjoint operators from $S(\mathcal{M},\tau)$ the following
question was asked in \cite{bik1}.

\begin{quest}\label{mainquestion} Let $a$ and $b\ge 0$ be self-adjoint $\tau-$measurable
operators affiliated with $\mathcal{M}$.
Is it necessarily true that
$$b^{1/2}ab^{1/2}\prec\prec ab?$$
\end{quest}

If $\mathcal{M}$ is a matrix algebra, then the positive answer to Question \ref{mainquestion} may be inferred from \cite{kosaki}. The main objective of the present article is to provide a different (stronger and more general) approach to Question 1. Our method allows us to produce a number of applications.

Observe that the inequality $\mu(b^{1/2}ab^{1/2})\leq\mu(ab)$ fails even for the case $\mathcal{M}=M_2(\mathbb{C})$ (see Remark 2, p. 575 of \cite{bik2}).

The author thanks A. Bikchentaev for drawing his attention to this problem and additional references and D. Zanin and B. de Pagter for a number of insightful comments which improved the article. Some results of this article have been announced in \cite{S}.

\section{The main result}

The gist of our approach to Question \ref{mainquestion} is
contained in Lemmas \ref{invertible case} and \ref{bounded case}
below. In the proofs we use two properties of singular value
functions (see \eqref{eq1} and \eqref{eq2} below  (see also
\cite[Lemma 4.1]{FK}, \cite{CS} and \cite[Proposition
3.10]{DDP3}). For simplicity of exposition, we shall assume that
$\mathcal{M}$ is an atomless von Neumann algebra. Indeed, this is
done by a standard trick consisting in considering a von Neumann
tensor product $\mathcal{M}\otimes L_\infty(0,1)$ (see details in
e.g. \cite[pp. 574-575]{bik1}).

Let $L_1$ and $L_\infty$ be Lebesgue spaces on
$(0,\tau(\mathbf{1}))$.

Let $a\in S(\mathcal{M},\tau)$. We say that $a\in
L_1(\mathcal{M},\tau)$ if and only if $\mu(a)\in L_1(0,\infty)$.
It is well-known that
$$
\|a\|_1:=\|\mu(a)\|_{L_1}
$$
is a Banach norm on $L_1(\mathcal{M},\tau)$. Similarly, we say
that $a\in (L_1+L_{\infty})(\mathcal{M},\tau)$ if and only if
$\mu(a)\in(L_1+L_{\infty})(0,\infty)$. Here, we identify
$\mathcal{M}$ with $L_\infty (\mathcal{M},\tau)$. The space
$(L_1+L_{\infty})(\mathcal{M},\tau)$ can be also viewed as a sum
of Banach spaces $L_1(\mathcal{M},\tau)$ and $L_\infty
(\mathcal{M},\tau)$ (the latter space is equipped with the uniform
norm, which we denote simply by $\|\cdot\|$).

For all $a,c\in(L_1+L_{\infty})(\mathcal{M}),$ we have (see \cite{FK,CS})
\begin{equation}\label{eq1}
\mu(ac)\prec\prec\mu(a)\mu(c).
\end{equation}

\begin{lem}\label{Ben} If $a\in(L_1+L_{\infty})(\mathcal{M},\tau),$
then
\begin{equation}\label{eq2}
\int_0^t\mu(s;a)ds=\sup\{|\tau(ac)|:\ |\tau(s(c))|\leq t,\
\|c\|\leq 1\},
\end{equation}
where $s (c)$ denotes the support projection of the operator c.
\end{lem}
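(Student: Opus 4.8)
The plan is to prove the two inequalities separately. For the inequality $\le$ in \eqref{eq2}, I would take an arbitrary $c\in\mathcal M$ with $\|c\|\le 1$ and $|\tau(s(c))|\le t$, and estimate $|\tau(ac)|$. Using the trace inequality $|\tau(ac)|\le\tau(|ac|)=\|ac\|_1=\int_0^\infty\mu(s;ac)\,ds$, together with \eqref{eq1} giving $\mu(ac)\prec\prec\mu(a)\mu(c)$ and the obvious pointwise bound $\mu(s;c)\le\chi_{[0,\tau(s(c))]}(s)\le\chi_{[0,t]}(s)$ (since $c$ vanishes off its support projection and $\|c\|\le1$), one obtains
\begin{equation*}
|\tau(ac)|\le\int_0^\infty\mu(s;a)\mu(s;c)\,ds\le\int_0^t\mu(s;a)\,ds.
\end{equation*}
Here I am using the elementary fact that $\int_0^\infty fg\le\int_0^\infty f^*g^*$ for the decreasing rearrangements, and that $b\prec\prec a$ together with $a\in L_1+L_\infty$ is enough to legitimise all the integrals (they may be $+\infty$, but the inequality still makes sense). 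This gives the bound $\le$ for the supremum.

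For the reverse inequality $\ge$, I would exhibit, for each fixed $t>0$, an explicit near-optimal $c$. Working in the atomless setting we may assume (after passing to $\mathcal M\otimes L_\infty(0,1)$ as indicated in the text, and recalling that $\mu$ is computed there) that there is a spectral-type decomposition allowing us to "cut off" $a$ at level $t$. Concretely, write the polar decomposition $a=u|a|$ and let $e=e^{|a|}(\mu(t;a),\infty)$ be the spectral projection of $|a|$ corresponding to the part of the spectrum strictly above $\mu(t;a)$; then $\tau(e)\le t$. Choosing $c=eu^*$ (so $\|c\|\le1$ and $s(c)\le e$, whence $|\tau(s(c))|\le t$) gives $\tau(ac)=\tau(u|a|eu^*)=\tau(|a|e)=\int_0^{\tau(e)}\mu(s;|a|)\,ds$. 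If $\tau(e)=t$ this already equals $\int_0^t\mu(s;a)\,ds$ and we are done; in general $\tau(e)<t$ and there is a "plateau" of $\mu(a)$ at the value $\mu(t;a)$, so one must enlarge $e$ within the eigenprojection at level $\mu(t;a)$ by a subprojection of trace exactly $t-\tau(e)$ — this is where atomlessness of $\mathcal M$ (or rather of the relevant corner) is used — to build $c$ with $\tau(s(c))=t$ and $\tau(ac)=\int_0^t\mu(s;a)\,ds$. If $a$ merely lies in $L_1+L_\infty$ and $\int_0^t\mu(s;a)\,ds<\infty$ this is a genuine maximiser; if the integral is infinite one truncates $a$ and lets the truncation level tend to infinity to see the supremum is $+\infty$ as well.

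The main obstacle is the reverse inequality, and within it the handling of the plateau of $\mu(a)$ at height $\mu(t;a)$: one needs a projection of trace exactly $t$ adapted to $|a|$, which forces the reduction to the atomless case and a careful use of the spectral projections of $|a|$ (and the fact, recalled in the preliminaries, that these projections and the partial isometry $u$ lie in $\mathcal M$). A secondary technical point is bookkeeping with possibly infinite integrals, which is resolved by the standard truncation argument. Everything else — the trace inequality, \eqref{eq1}, and the rearrangement inequality $\int fg\le\int f^*g^*$ — is routine.
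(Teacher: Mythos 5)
Your proof is correct, and the $\le$ direction matches the paper's (both reduce to $|\tau(ac)|\le\tau(|ac|)$ and then control $\mu(ac)$ by $\mu(a)$ on $[0,t]$ via \eqref{eq1} and the fact that $ac$ has support of trace at most $t$). Where you diverge from the paper is in the $\ge$ direction: you build a near-optimal $c$ directly from the polar decomposition $a=u|a|$ and a spectral projection $e=e^{|a|}(\mu(t;a),\infty)$, and then patch the plateau of $\mu(a)$ at height $\mu(t;a)$ by enlarging $e$ inside the corresponding spectral subspace using atomlessness. The paper instead starts from the known variational formula
$$\int_0^t\mu(s;x)\,ds=\sup\{\tau(p|x|p):\ p=p^*=p^2\in\mathcal M,\ \tau(p)\le t\},$$
noting that for $a\in(L_1+L_\infty)(\mathcal M,\tau)$ and $\tau(p)\le t$ one has $\tau(p|a|p)=\tau(|a|p)=\tau(v^*ap)=\tau(a\,pv^*)$, and then takes $c=pv^*$; this makes the substitution once and for all without having to single out a specific $p$ or handle plateaux by hand. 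In effect your argument re-derives the variational formula from scratch, which is more self-contained but longer and forces you to confront the plateau/exact-trace issue explicitly, whereas the paper delegates precisely that technical point to the cited formula (which also requires atomlessness). Both routes are sound; yours is more elementary, the paper's is shorter given the available reference. One small tidying remark: since $a\in(L_1+L_\infty)(\mathcal M,\tau)$ automatically gives $\int_0^t\mu(s;a)\,ds<\infty$ for every finite $t$, your final truncation caveat for the infinite-integral case is actually vacuous here.
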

\begin{proof}
Recall that if $x\in S((\mathcal{M},\tau)$, then
$$\tau (s (x)) =\inf \{ s\ge 0 : \mu(s; x) = 0\}.$$
Therefore, if $c\in \mathcal{M}$ satisfies
$\|c\|\leq 1$ and $|\tau(s(c))|\leq t$, then $s(ac)\leq s(c)$ and
so
$$
|\tau(ac)|\leq \tau(|ac|)=\int_0^\infty \mu(s;ac)ds=\int_0^t
\mu(s;ac)ds.
$$
For the converse inequality, first recall the following fact (see \cite{FK,DPS}): if
$x\in S((\mathcal{M},\tau)$, then (under the assumption that there are no atoms in $\mathcal{M}$) we have
$$
\int_0^t \mu(s;x)ds=\sup\{\tau(p|x|p):\ p=p^*=p^2\in\mathcal{M},\
\tau(p)\leq t\}.
$$
If $a\in(L_1+L_{\infty})(\mathcal{M},\tau),$ and $\tau (p) \leq t$
then $pap$, $ap \in L_1(\mathcal{M},\tau)$ and so,
$\tau(p|a|p)=\tau(|a|p)$. Hence,
$$
\int_0^t \mu(s;a)ds=\sup\{\tau(|a|p):\ p=p^*=p^2\in\mathcal{M},\
\tau(p)\leq t\}
$$
for all $a\in(L_1+L_{\infty})(\mathcal{M},\tau)$.

If $p=p^*=p^2\in\mathcal{M}$ with $\tau(p)\leq t$, then $|a| p =
v^*ap$ (where $a = v|a|$ is the polar decomposition of a) and so,
$\tau(|a| p) =\tau( v^*ap)=\tau (apv^*)$. Setting $c = pv^*$, it
follows that $\mu (c)\leq \mu(p)$ and so, $\mu (s; c) = 0$ for all
$s\ge\tau (p)$, that is, $\tau(s (c)) \leq\tau (p) \leq $t. The
result of the lemma follows.
\end{proof}

The following remark is well-known (and trivial) and stated here
for convenience of the reader.

\begin{rem}\label{trivial} Let $b\in\mathcal{M}.$ The mapping $z\to e^{zb}$ takes its values in $\mathcal{M}$ and is holomorphic on $\mathbb{C}.$
\end{rem}

\begin{lem}\label{invertible case} For any self-adjoint operators
$a,b\in\mathcal{M}$ and every $\theta\in(0,1)$, we have
$$e^{\theta b}ae^{(1-\theta)b}\prec\prec ae^b.$$
\end{lem}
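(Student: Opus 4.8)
The plan is to exploit Lemma~\ref{Ben}, which characterizes $\int_0^t\mu(s;x)\,ds$ as a supremum over test operators $c$ with $\|c\|\le 1$ and $\tau(s(c))\le t$. So to prove $e^{\theta b}ae^{(1-\theta)b}\prec\prec ae^b$ it suffices to fix such a $c$ and bound $|\tau(e^{\theta b}ae^{(1-\theta)b}c)|$ by $\int_0^t\mu(s;ae^b)\,ds$. The natural device is complex interpolation / the three-lines theorem: consider the entire function
\begin{equation*}
F(z)=\tau\bigl(e^{zb}ae^{(1-z)b}c\bigr),\qquad z\in\mathbb{C},
\end{equation*}
which is well defined and holomorphic by Remark~\ref{trivial} (each factor $e^{zb}$, $e^{(1-z)b}$ is a holomorphic $\mathcal{M}$-valued function, $a,c\in\mathcal{M}$, and $\tau$ is continuous on the relevant $L_1$-component after absorbing $c$). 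One then controls $F$ on the strip $0\le\operatorname{Re}z\le 1$.

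The key steps, in order: (1) On the line $\operatorname{Re}z=0$, write $z=is$ and note $e^{isb}$, $e^{(1-is)b}=e^b e^{-isb}$ are unitary times a fixed operator; using the trace property and the fact that $e^{\pm isb}$ are unitaries (as $b=b^*$), one gets $|F(is)|=|\tau(e^{isb}ae^{b}e^{-isb}c)|=|\tau(a e^b e^{-isb}ce^{isb})|$, and since $e^{-isb}ce^{isb}$ still has norm $\le 1$ and support projection with trace $\le t$, Lemma~\ref{Ben} gives $|F(is)|\le\int_0^t\mu(s;ae^b)\,ds$. (2) On $\operatorname{Re}z=1$, the symmetric computation $|F(1+is)|=|\tau(e^{(1+is)b}ae^{-isb}c)|=|\tau(ae^b e^{isb}c e^{-isb}\cdot\,\text{(unitary conjugation)})|$ again yields the same bound $\int_0^t\mu(s;ae^b)\,ds$. (3) Apply the three-lines theorem (Hadamard) to conclude $|F(\theta)|\le\int_0^t\mu(s;ae^b)\,ds$ for $\theta\in(0,1)$. (4) Since this holds for every admissible $c$, take the supremum and invoke Lemma~\ref{Ben} in the other direction to read off $\int_0^t\mu(s;e^{\theta b}ae^{(1-\theta)b})\,ds\le\int_0^t\mu(s;ae^b)\,ds$, i.e. the submajorization.

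The main obstacle I anticipate is the boundary estimate in step (1)–(2): one must manipulate the trace $\tau(e^{zb}ae^{(1-z)b}c)$ on the two vertical lines so that it literally matches the form $\tau(a e^b c')$ with $\|c'\|\le 1$ and $\tau(s(c'))\le t$ — this requires using $e^{\pm isb}$ being unitary, the cyclicity of $\tau$, and checking that conjugation by a unitary preserves both the norm bound and the support-size constraint on the test operator. A secondary technical point is justifying that $F$ is bounded on the strip (so that Hadamard applies): since $b$ is bounded, $\|e^{zb}\|\le e^{|\operatorname{Re}z|\,\|b\|}$ is uniformly bounded on $0\le\operatorname{Re}z\le 1$, and $\tau(\,\cdot\,c)$ with $\tau(s(c))\le t$ is bounded on $\mathcal{M}$ by $t\|\cdot\|$, so $|F(z)|\le t\|a\|e^{\|b\|}$ there; this is routine. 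Everything else is bookkeeping with Lemma~\ref{Ben}.
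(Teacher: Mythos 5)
Your approach is identical to the paper's: define the entire function $F(z)=\tau(e^{zb}ae^{(1-z)b}c)$, bound it on the two vertical lines of the strip using Lemma~\ref{Ben} and unitary conjugation, and close with Hadamard's three-lines theorem. That overall strategy is correct.

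However, your step~(2) contains a genuine gap (and a computational slip). On the line $\operatorname{Re}z=1$, cyclicity of $\tau$ together with the fact that $e^{b}$ and $e^{\pm isb}$ commute gives
$$F(1+is)=\tau\bigl(e^{b}e^{isb}ae^{-isb}c\bigr)=\tau\bigl(e^{b}a\,e^{-isb}ce^{isb}\bigr)=\tau\bigl(e^{b}a\,d\bigr),\qquad d:=e^{-isb}ce^{isb},$$
and \emph{not} $\tau(ae^{b}d)$ as you write. The factor $e^{b}$ ends up on the \emph{left} of $a$, not the right. Consequently the bound you obtain from Lemma~\ref{Ben} on this line is $\int_0^t\mu(s;e^{b}a)\,ds$, which is in general different from $\int_0^t\mu(s;ae^{b})\,ds$. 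To match the two boundary bounds you must invoke the hypothesis that $a$ is self-adjoint: since $a=a^*$ and $b=b^*$ we have $(ae^{b})^*=e^{b}a$, and $\mu(x)=\mu(x^*)$ always, hence $\mu(e^{b}a)=\mu(ae^{b})$. Your proposal never uses $a=a^*$, so as written it would ``prove'' the submajorization for arbitrary $a\in\mathcal{M}$ --- but the conclusion is \emph{false} without self-adjointness of $a$, as Example~\ref{tr} in the paper demonstrates. This is exactly the place where the hypothesis enters, and it needs to be made explicit.

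A secondary (minor) remark: in your parenthetical justification that $F$ is well defined you say ``$\tau$ is continuous on the relevant $L_1$-component after absorbing $c$''; the cleaner and fully sufficient observation is simply $|\tau(xc)|\le\tau(|xc|)\le t\,\|x\|$ whenever $\|c\|\le 1$ and $\tau(s(c))\le t$, which you correctly state later when bounding $F$ on the strip.
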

\begin{proof} Appealing to Remark \ref{trivial}, we see that the mapping $z\to e^{zb}ae^{(1-z)b}$ takes values in $\mathcal{M}$ and is holomorphic on $\mathbb{C}.$

Fix an operator $c\in\mathcal{M}$ such that $\|c\|\leq1$ and
$\tau(s(c))\leq t$. The $\mathbb{C}-$valued function $F:z\to\tau(e^{bz}ae^{b(1-z)}c)$
is also holomorphic on $\mathbb{C}.$ For every $0\leq\Re z\leq 1,$ it
follows from \eqref{eq1} that
$$|F(z)|\leq t\|e^{zb}ae^{(1-z)b}\|\leq t\|e^{zb}\|\cdot\|a\|\cdot\|e^{(1-z)b}\|\leq t\|a\|e^{2\|b\|}.$$
Hence, $F$ is a bounded function in the strip $0\leq\Re z\leq 1.$ Since $F$ is holomorphic on $\mathbb{C},$
it follows that $F$ is continuous on the boundary of the strip $0\leq\Re z\leq 1.$
By Hadamard three-lines theorem (see e.g. \cite[p. 33-34]{rs}), we have
$$|F(z)|\leq\sup_{y\in\mathbb{R}}\max\{|F(iy)|,|F(1+iy)|\}.$$

To estimate $|F(iy)|,$ we argue as follows:
$$|F(iy)|=|\tau(e^{iyb}ae^be^{-iyb}c)|\leq
\tau(|ae^be^{-iyb}ce^{iyb}|)=\tau(|ae^bd|),
$$
where $d=e^{-iyb}ce^{iyb}$. Since $\mu(ae^bd)\leq
\|ae^b\|\mu(d)=\|ae^b\|\mu(c)$ it is clear that $\mu(ae^bd;s)=0$
for $s\ge t$. Hence,
$$|\tau(ae^bd)|=\int_0^\infty\mu(ae^bd;s)ds\leq\int_0^t\mu(ae^bd;s)ds\leq\int_0^t\mu(ae^b;s)ds.$$
Similarly,
$$|F(1+iy)|=|\tau(e^ba)|\leq\int_0^t\mu(s;e^ba)ds$$
and, appealing to the assumption $a=a^*,$ we may conclude that
$$|F(z)|=|\tau(e^{bz}ae^{b(1-z)}c)|\leq\int_0^t\mu(s;ae^b)ds$$
for all $z\in\mathbb{C}$ with $0\leq\Re z\leq 1.$ Since this holds for all operators $c\in\mathcal{M}$ such that $|\tau(s(c))|\leq t$ and $\|c\|\leq1$, we obtain from Lemma \ref{Ben} that the estimate
$$\int_0^t\mu(s;e^{zb}ae^{(1-z)b})ds\leq \int_0^t\mu(s;ae^b)ds$$
holds for all $z\in \mathbb{C}$ with $0\leq\Re z\leq 1.$ Setting $z=\theta\in(0,1),$ we conclude the proof.
\end{proof}

Observe that the assumption that $a\in\mathcal{M}$ is a self-adjoint operator was crucially used in the proof above, where we concluded that $\mu(e^ba)=\mu(ae^b).$ In fact, the assertion of
the above lemma does not hold for a non-self-adjoint operator $a.$

\begin{ex}\label{tr} Let $M_2(\mathbb{C})$ be the von Neumann algebra of all $2\times 2$ matrices. There exist $a,b\in M_2(\mathbb{C})$ such $b=b^*$ and such that the inequality
$$e^{b/2}ae^{b/2}\prec\prec ae^b$$
fails.
\end{ex}
\begin{proof} Let $\lambda,\mu\in\mathbb{R}.$ We set
$$a=\begin{pmatrix}
0&1\\
0&0
\end{pmatrix},
b=\begin{pmatrix}
\lambda&0\\
0&\mu
\end{pmatrix}.
$$
A direct computation yields $ae^b=e^{\mu}a$ and $e^{b/2}ae^{b/2}=e^{(\lambda+\mu)/2}a.$ Setting $\lambda>\mu,$ we obtain the assertion.
\end{proof}

However, a quick analysis of the proof of Lemma \ref{invertible case} yields a following strengthening.

\begin{lem}\label{invertible case2} Let $a,b\in\mathcal{M}$. If $b=b^*,$ then
$$e^{\theta b}ae^{(1-\theta)b}\prec\prec \max \{\mu(ae^b),\ \mu (e^ba)\}$$ for every $\theta\in(0,1).$
\end{lem}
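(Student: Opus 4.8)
The plan is to revisit the proof of Lemma~\ref{invertible case} and see precisely where the self-adjointness of $a$ was used, then relax only that step. In the proof of Lemma~\ref{invertible case}, the hypothesis $a=a^*$ enters exactly once: at the very end, when the bound $|F(1+iy)|\le\int_0^t\mu(s;e^ba)\,ds$ was converted into $|F(1+iy)|\le\int_0^t\mu(s;ae^b)\,ds$ using $\mu(e^ba)=\mu(ae^b)$ (which for self-adjoint $a$ follows from $\mu(x)=\mu(x^*)$ applied to $x=ae^b$, since $(ae^b)^*=e^ba$). Without self-adjointness we simply keep both estimates: we have obtained $|F(iy)|\le\int_0^t\mu(s;ae^b)\,ds$ on the line $\Re z=0$ and $|F(1+iy)|\le\int_0^t\mu(s;e^ba)\,ds$ on the line $\Re z=1$.

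The next step is to apply the Hadamard three-lines theorem exactly as before. Fixing $c\in\mathcal M$ with $\|c\|\le 1$ and $\tau(s(c))\le t$, the function $F(z)=\tau(e^{bz}ae^{b(1-z)}c)$ is holomorphic on $\mathbb C$ and bounded on the strip $0\le\Re z\le1$ (the bound $t\|a\|e^{2\|b\|}$ from Remark~\ref{trivial} still applies). Hence
$$|F(\theta)|\le\sup_{y\in\mathbb R}\max\{|F(iy)|,|F(1+iy)|\}\le\max\left\{\int_0^t\mu(s;ae^b)\,ds,\ \int_0^t\mu(s;e^ba)\,ds\right\}=\int_0^t\max\{\mu(s;ae^b),\mu(s;e^ba)\}\,ds,$$
where the last equality uses that for each fixed $t$ one of the two integrals is the larger, so the pointwise maximum of the two monotone functions has integral equal to the maximum of the integrals (in fact $\int_0^t\max\{f,g\}\ge\max\{\int_0^t f,\int_0^t g\}$ always, and here we only need the inequality $\le$ in the displayed chain, so even the crude bound $\max\{\int f,\int g\}\le\int\max\{f,g\}$ suffices). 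Since $F(\theta)=\tau(e^{\theta b}ae^{(1-\theta)b}c)$, taking the supremum over all admissible $c$ and invoking Lemma~\ref{Ben} gives
$$\int_0^t\mu(s;e^{\theta b}ae^{(1-\theta)b})\,ds\le\int_0^t\max\{\mu(s;ae^b),\mu(s;e^ba)\}\,ds\qquad\text{for all }t>0,$$
which is precisely the claimed submajorization.

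There is essentially no obstacle here beyond bookkeeping: the entire argument of Lemma~\ref{invertible case} goes through verbatim, and the only modification is to refrain from collapsing the two boundary estimates into one. The one point that deserves a word is the identification of $\max\{\mu(ae^b),\mu(e^ba)\}$ as a legitimate element of $(L_1+L_\infty)(0,\infty)$ to which submajorization applies — this is immediate since $b\in\mathcal M$ forces $e^b\in\mathcal M$, hence $ae^b,e^ba\in\mathcal M\subseteq L_1+L_\infty$ when $\tau(\mathbf 1)<\infty$, and in general one reduces as in the remark preceding Lemma~\ref{Ben}; in any case $\mu(ae^b)$ and $\mu(e^ba)$ are bounded measurable functions and their pointwise maximum is again such a function, so the Hardy–Littlewood–Polya relation $\prec\prec$ against it is meaningful.
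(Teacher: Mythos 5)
Your proposal is correct and follows exactly the route the paper intends: the paper does not write out a proof of Lemma~\ref{invertible case2}, saying only that a quick analysis of the proof of Lemma~\ref{invertible case} yields the strengthening, and your analysis --- retain the two separate boundary estimates $|F(iy)|\le\int_0^t\mu(s;ae^b)\,ds$ and $|F(1+iy)|\le\int_0^t\mu(s;e^ba)\,ds$ instead of collapsing them via $\mu(ae^b)=\mu(e^ba)$ --- is precisely that analysis. One small wobble to note: the displayed equality $\max\bigl\{\int_0^t\mu(s;ae^b)\,ds,\ \int_0^t\mu(s;e^ba)\,ds\bigr\}=\int_0^t\max\{\mu(s;ae^b),\mu(s;e^ba)\}\,ds$ is false in general (the two rearrangements can cross), and your stated justification for it is incorrect; but, as you observe in the parenthetical, only the inequality $\le$ is needed in the chain, and that direction always holds, so the argument is sound.
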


For every $\varepsilon,\delta>0,$ we define the set
$$V(\varepsilon,\delta)=\{x\in S(\mathcal{M},\tau):\ \exists p=p^2=p^*\in\mathcal{M}\mbox{ such that }
\|x(1-p)\|\leq\varepsilon,\ \tau(p)\leq\delta\}.$$ The topology
generated by the sets $V(\varepsilon,\delta),$
$\varepsilon,\delta>0,$ is called a measure topology.

The following assertion is well-known. We incorporate the proof
for convenience of the reader.

\begin{lem}\label{fk1 lemma} Let $x_n,x\in S(\mathcal{M},\tau)$ be such that $x_n\to x$ in measure topology.
It follows that $\mu(x_n)\to\mu(x)$ almost everywhere.
\end{lem}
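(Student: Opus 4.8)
The plan is to prove that $\mu(t;x_n)\to\mu(t;x)$ at every $t>0$ at which the non-increasing right-continuous function $t\mapsto\mu(t;x)$ is continuous; since a monotone function is continuous outside a countable set, this yields convergence almost everywhere. The only nontrivial ingredient is the sub-additivity estimate
$$\mu(t+s;y+z)\le\mu(t;y)+\mu(s;z),\qquad y,z\in S(\mathcal{M},\tau),\ t,s>0,$$
which is standard (see \cite{FK}).

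First I would convert convergence in measure into a bound on singular values. Fix $\varepsilon,\delta>0$. By definition of the measure topology there is $N$ with $x_n-x\in V(\varepsilon,\delta)$ for all $n\ge N$, so for each such $n$ there is a projection $p_n=p_n^2=p_n^*\in\mathcal{M}$ with $\|(x_n-x)(1-p_n)\|\le\varepsilon$ and $\tau(p_n)\le\delta$. Applying the defining formula for $\mu$ with the projection $1-p_n$ (which satisfies $\tau(\mathbf{1}-(1-p_n))=\tau(p_n)\le\delta$) gives $\mu(\delta;x_n-x)\le\varepsilon$ for all $n\ge N$.

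Next I would sandwich $\mu(t;x_n)$. Writing $x_n=x+(x_n-x)$ and $x=x_n+(x-x_n)$, the sub-additivity estimate together with $\mu(\delta;x_n-x)=\mu(\delta;x-x_n)\le\varepsilon$ yields, for all $t>\delta$ and all $n\ge N$,
$$\mu(t;x_n)\le\mu(t-\delta;x)+\varepsilon,\qquad \mu(t;x)\le\mu(t-\delta;x_n)+\varepsilon.$$
Replacing $t$ by $t+\delta$ in the second inequality gives $\mu(t+\delta;x)-\varepsilon\le\mu(t;x_n)$, whence
$$\mu(t+\delta;x)-\varepsilon\le\liminf_n\mu(t;x_n)\le\limsup_n\mu(t;x_n)\le\mu(t-\delta;x)+\varepsilon$$
for every $t>\delta$. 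Letting $\varepsilon\downarrow0$ and then $\delta\downarrow0$, and using that $\mu(\cdot;x)$ is right-continuous (so $\mu(t+\delta;x)\to\mu(t;x)$) while $\mu(t-\delta;x)\to\mu(t^-;x)$, we obtain $\mu(t;x_n)\to\mu(t;x)$ whenever $\mu(t^-;x)=\mu(t;x)$, i.e. at every continuity point of $\mu(\cdot;x)$, which completes the argument.

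The argument is essentially routine once the sub-additivity inequality is available; the only points requiring a little care are the passage from $x_n-x\in V(\varepsilon,\delta)$ to $\mu(\delta;x_n-x)\le\varepsilon$ (immediate from the projection-based definition of $\mu$) and the harmless shift by $\delta$ near the origin, which costs nothing since $\delta>0$ is arbitrary.
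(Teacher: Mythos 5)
Your proof is correct and follows essentially the same route as the paper's: convert measure convergence into the bound $\mu(\delta;x_n-x)\le\varepsilon$, apply subadditivity of $\mu$ to sandwich $\mu(t;x_n)$ between $\mu(t\pm\delta;x)\mp\varepsilon$, and conclude at continuity points of $\mu(\cdot;x)$, which form a co-countable set. The only cosmetic difference is that the paper chooses $\delta$ as a function of $\varepsilon$ via continuity of $\mu$ at $t$, while you keep $\varepsilon,\delta$ independent and pass to the limit in two stages; both are valid.
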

\begin{proof} Let $t$ be the continuity point of $\mu(x)$.
Fix $\varepsilon>0$ and select $\delta>0$ such that
$|\mu(t;x)-\mu(t\pm\delta;x)|\leq\varepsilon$. Since $x_n-x\to 0$
in measure, it follows that $x_n-x\in V(\varepsilon,\delta)$ for
every $n\geq N.$ Thus, $\mu(\delta;x_n-x)\leq\varepsilon.$

We have
$$\mu(t;x)\leq\mu(t+\delta;x)+\varepsilon\leq\varepsilon+\mu(t;x_n)+\mu(\delta;x_n-x)\leq 2\varepsilon+\mu(t;x_n)$$
and
$$\mu(t;x_n)\leq\mu(t-\delta;x)+\mu(\delta;x-x_n)\leq2\varepsilon+\mu(t;x).$$
Thus, $\mu(t;x_n)\to\mu(t;x).$ The assertion follows from the fact that $\mu(x)$ is almost everywhere continuous.
\end{proof}

\begin{lem}\label{bounded case} Let $a,b\in\mathcal{M}$

\begin{enumerate}[(i)]
\item If $a$ is self adjoint and $b\ge 0$, then
$$b^{\theta}ab^{1-\theta}\prec\prec ab$$
for every $\theta\in(0,1).$

\item  If $a$ is an arbitrary operator and $b\ge 0$, then we have
$$b^{\theta}ab^{1-\theta}\prec\prec \max \{\mu(ab),\ \mu (ba)\}$$ for every $\theta\in(0,1).$
\end{enumerate}
\end{lem}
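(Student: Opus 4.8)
The plan is to reduce both parts to Lemmas~\ref{invertible case} and~\ref{invertible case2} by writing a positive invertible operator as the exponential of a bounded self-adjoint operator, and then to remove the invertibility hypothesis by approximation. So suppose first that, in addition, $b$ is invertible, i.e. $\delta\mathbf 1\le b\le\|b\|\mathbf 1$ for some $\delta>0$. Applying the continuous functional calculus to $t\mapsto\log t$, which is continuous on $[\delta,\|b\|]$, one obtains a self-adjoint $c:=\log b\in\mathcal M$ with $e^{c}=b$, $e^{\theta c}=b^{\theta}$ and $e^{(1-\theta)c}=b^{1-\theta}$, so that $b^{\theta}ab^{1-\theta}=e^{\theta c}ae^{(1-\theta)c}$. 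Lemma~\ref{invertible case}, applied with $c$ in place of $b$, then gives $b^{\theta}ab^{1-\theta}\prec\prec ae^{c}=ab$ whenever $a=a^*$, and Lemma~\ref{invertible case2} gives $b^{\theta}ab^{1-\theta}\prec\prec\max\{\mu(ae^{c}),\mu(e^{c}a)\}=\max\{\mu(ab),\mu(ba)\}$ for arbitrary $a\in\mathcal M$. This proves (i) and (ii) when $b$ is invertible.

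For a general $b\ge 0$ in $\mathcal M$ I would put $b_\varepsilon:=b+\varepsilon\mathbf 1$ for $\varepsilon\in(0,1]$; these operators are positive and invertible, so the previous paragraph yields, for every $t>0$,
$$\int_0^t\mu(s;b_\varepsilon^{\theta}ab_\varepsilon^{1-\theta})\,ds\le\int_0^t\mu(s;ab_\varepsilon)\,ds$$
in case~(i), and the analogous inequality with $\max\{\mu(ab_\varepsilon),\mu(b_\varepsilon a)\}$ on the right-hand side in case~(ii). It remains to let $\varepsilon\downarrow 0$. For each $r\in\{\theta,1-\theta,1\}$ the continuous functions $t\mapsto(t+\varepsilon)^{r}$ decrease pointwise on $[0,\|b\|]$ to the continuous function $t\mapsto t^{r}$ as $\varepsilon\downarrow 0$, so by Dini's theorem the convergence is uniform and hence $\|b_\varepsilon^{r}-b^{r}\|\to 0$ by the functional calculus. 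Therefore $b_\varepsilon^{\theta}ab_\varepsilon^{1-\theta}\to b^{\theta}ab^{1-\theta}$, $ab_\varepsilon\to ab$ and $b_\varepsilon a\to ba$ in the uniform norm, hence in the measure topology, and Lemma~\ref{fk1 lemma} gives $\mu(b_\varepsilon^{\theta}ab_\varepsilon^{1-\theta})\to\mu(b^{\theta}ab^{1-\theta})$, $\mu(ab_\varepsilon)\to\mu(ab)$ and $\mu(b_\varepsilon a)\to\mu(ba)$ almost everywhere.

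To finish I would pass to the limit inside the integrals above. For $\varepsilon\in(0,1]$ we have $\|b_\varepsilon\|\le\|b\|+1$, so each of the operators $b_\varepsilon^{\theta}ab_\varepsilon^{1-\theta}$, $ab_\varepsilon$, $b_\varepsilon a$ has norm at most $(\|b\|+1)\|a\|$, and consequently each of the singular value functions occurring above is dominated on $[0,t]$ by the constant $(\|b\|+1)\|a\|$, which is integrable there; the dominated convergence theorem then applies. This gives $\int_0^t\mu(s;b^{\theta}ab^{1-\theta})\,ds\le\int_0^t\mu(s;ab)\,ds$ for all $t>0$, i.e. $b^{\theta}ab^{1-\theta}\prec\prec ab$, proving (i); and likewise $b^{\theta}ab^{1-\theta}\prec\prec\max\{\mu(ab),\mu(ba)\}$ for arbitrary $a$, proving (ii) --- here one uses that the pointwise maximum of the two non-increasing functions $\mu(ab)$ and $\mu(ba)$ is itself non-increasing, hence equals its own decreasing rearrangement. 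The essential content is already present in Lemmas~\ref{invertible case} and~\ref{invertible case2}; the present lemma follows from them by a routine reduction, and the only points that require any care are the two limiting steps --- the uniform convergence $b_\varepsilon^{r}\to b^{r}$, supplied by Dini's theorem, and the uniform integrable bound needed to interchange limit and integral in the definition of submajorization.
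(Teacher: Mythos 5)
Your proof follows the paper's argument step for step: reduce to the invertible case by writing $b=e^{\log b}$ and invoking Lemmas~\ref{invertible case} and~\ref{invertible case2}, then approximate a general $b\ge 0$ by $b+\varepsilon\mathbf 1$ and pass to the limit using Lemma~\ref{fk1 lemma} together with a uniform bound on the relevant singular value functions. The only differences are cosmetic — you spell out the uniform operator-norm convergence via Dini's theorem and name the dominated convergence theorem, where the paper just asserts the convergence in measure and says the limit passage follows from the uniform bound.
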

\begin{proof} We shall prove only the first assertion (the proof
of the second is the same via Lemma \ref{invertible case2}).

Suppose first that $b$ is a positive invertible operator from
$\mathcal{M}$. Then $\log(b)$ is a self-adjoint operator from
$\mathcal{M}$ and applying Lemma \ref{invertible case} to the
bounded operators $a$ and $\log(b),$ we obtain the assertion.

In general case, fix $n\in\mathbb{N},$ and consider the operator $b_n:=b+\frac1n$ which is obviously invertible. It follows from the first part of the proof that
$$\int_0^t\mu(s;b_n^{\theta}ab_n^{1-\theta})ds\leq\int_0^t\mu(s;ab_n)ds.$$
By Lemma \ref{fk1 lemma}
$$\mu(b_n^{\theta}ab_n^{1-\theta})\to\mu(b^{\theta}ab^{1-\theta}),
\quad\mu(ab_n)\to\mu(ab)$$ almost everywhere. Since the functions
$\mu(b^{\theta}ab^{1-\theta})$ and $\mu(ab)$ are uniformly
bounded, we easily infer from here that for every $t\ge 0$, we
have
$$\int_0^t\mu(s;b^{\theta}ab^{1-\theta})ds\leq\int_0^t\mu(s;ab)ds.$$
\end{proof}

The resolution of Question \ref{mainquestion} is contained in the
first part of the theorem below. Observe that only the case
$ab,ba\in(L_1+L_{\infty})(\mathcal{M},\tau)$ should be treated.
Indeed, if the latter assumption does not hold then the answer to
Question \ref{mainquestion} is trivially affirmative.

\begin{thm}\label{bik th} Let $\mathcal{M}$ be a von Neumann algebra and
let $a,b\in S(\mathcal{M},\tau)$ be such operators that $b\ge 0$
and $ab\in(L_1+L_{\infty})(\mathcal{M},\tau).$
\begin{enumerate}[(i)]
\item If $a$ is self-adjoint, then
$$b^{\theta}ab^{1-\theta}\prec\prec ab$$
for every $\theta\in(0,1)$.
\item  If $a$ is an arbitrary operator, then we have
$$b^{\theta}ab^{1-\theta}\prec\prec \max \{\mu(ab),\ \mu (ba)\}$$
for every $\theta\in(0,1)$.
\end{enumerate}
\end{thm}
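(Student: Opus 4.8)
The plan is to reduce the statement for general $\tau$-measurable operators to the bounded case already established in Lemma \ref{bounded case}, using a truncation-and-approximation argument in the measure topology. Since the hypothesis $ab\in(L_1+L_\infty)(\mathcal{M},\tau)$ is what makes the right-hand side finite (and, by the remark preceding the theorem, the case where this fails is trivial), the real content is an approximation scheme that is compatible with this integrability condition.

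First I would treat part (i). For $n\in\mathbb{N}$, set $p_n=e^{b}[0,n]$ and $q_n=e^{|a|}[0,n]$, and define the bounded operators $b_n=bp_n$ and $a_n=q_naq_n$ (the latter is self-adjoint since $a$ is). Each $b_n\ge 0$ and $a_n,b_n\in\mathcal{M}$, so Lemma \ref{bounded case}(i) gives $b_n^{\theta}a_nb_n^{1-\theta}\prec\prec a_nb_n$ for every $\theta\in(0,1)$. The key facts to verify are: (1) $b_n\to b$, $a_n\to a$ in the measure topology, hence $b_n^{\theta}\to b^{\theta}$, $b_n^{1-\theta}\to b^{1-\theta}$ as well (continuity of $t\mapsto t^{\theta}$ on $[0,\infty)$ in measure topology, or directly via spectral projections), and therefore $b_n^{\theta}a_nb_n^{1-\theta}\to b^{\theta}ab^{1-\theta}$ and $a_nb_n\to ab$ in measure, since multiplication is jointly continuous on $S(\mathcal{M},\tau)$; (2) by Lemma \ref{fk1 lemma}, $\mu(b_n^{\theta}a_nb_n^{1-\theta})\to\mu(b^{\theta}ab^{1-\theta})$ and $\mu(a_nb_n)\to\mu(ab)$ almost everywhere.

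The final step is to pass to the limit in the inequality $\int_0^t\mu(s;b_n^{\theta}a_nb_n^{1-\theta})\,ds\leq\int_0^t\mu(s;a_nb_n)\,ds$. On the left I would use Fatou's lemma together with the a.e. convergence to get $\int_0^t\mu(s;b^{\theta}ab^{1-\theta})\,ds\leq\liminf_n\int_0^t\mu(s;a_nb_n)\,ds$. On the right, the delicate point is an upper bound: I would show $\mu(a_nb_n)\leq\mu(ab)$, or at least a dominated-convergence substitute, so that $\limsup_n\int_0^t\mu(s;a_nb_n)\,ds\leq\int_0^t\mu(s;ab)\,ds$. Here one uses $a_nb_n=q_naq_nbp_n=q_n(aq_n)(bp_n)$ and the fact that cutting down $ab$ by the spectral projections $q_n$ of $|a|$ and $p_n$ of $b$ does not increase singular values in the appropriate averaged sense — more precisely, $a_nb_n$ differs from a cut-down of $ab$ by terms involving $[a,q_n]$-type commutators which vanish in measure, combined with the monotonicity $\mu(q_n x p_n)\leq\mu(x)$ for projections. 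Since $ab\in(L_1+L_\infty)(\mathcal{M},\tau)$, the truncated integrals $\int_0^t\mu(s;ab)\,ds$ are finite, so this gives the result.

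The main obstacle I anticipate is controlling the right-hand side: one must choose the truncation so that $a_nb_n$ genuinely approximates $ab$ \emph{from below} in the Hardy-Littlewood sense (or is dominated by an integrable function), rather than merely converging in measure, because Fatou alone only controls the left side. A clean way around this is to avoid truncating $a$ with the projections of $|a|$ at all: instead write $a=a_++a_--i a_{\mathrm{im}}$ is unnecessary since $a=a^*$, so simply set $a_n=q_naq_n$ and note $ab-a_nb_n=(1-q_n)ab+q_na(1-q_n)b$; the first term tends to $0$ in measure since $(1-q_n)\downarrow 0$ strongly and $ab\in S(\mathcal{M},\tau)$, and for the second one uses that $a(1-q_n)\to 0$ in measure and $b\in S(\mathcal{M},\tau)$, hence the product $\to 0$ in measure. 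Then $\mu(a_nb_n)\to\mu(ab)$ a.e. and, crucially, one establishes the uniform majorization $a_nb_n\prec\prec ab$ directly: since $a_nb_n=q_n(ab)-q_naq_n^{\perp}b$ and each piece is handled by $\mu(q_nx)\leq\mu(x)$ plus the fact that the error piece is negligible, a limiting argument on $n$ within the family itself yields $\int_0^t\mu(s;a_nb_n)\,ds\leq\int_0^t\mu(s;ab)\,ds+o(1)$. Part (ii) follows verbatim from the same scheme, invoking Lemma \ref{bounded case}(ii) in place of (i) and replacing $\mu(ab)$ by $\max\{\mu(ab),\mu(ba)\}$ throughout; note $ba\in(L_1+L_\infty)(\mathcal{M},\tau)$ automatically, since $\mu(ba)=\mu(ab)$ is false in general but $\mu(ba)\prec\prec\mu(b)\mu(a)$ and one reduces to the bounded truncations exactly as before.
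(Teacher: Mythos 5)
Your overall architecture matches the paper's: truncate $a$ and $b$ to bounded operators, apply Lemma~\ref{bounded case}, use Lemma~\ref{fk1 lemma} and Fatou to pass to the limit on the left-hand side, and bound the right-hand side by $\mu(ab)$ (resp.\ $\max\{\mu(ab),\mu(ba)\}$). The difference, and where your proposal has a real gap, is in the last step. The paper's truncation is \emph{one-sided}: with $p_n=e^{|a|}[0,n)$, $q_n=e^{b}[0,n)$, it uses $ap_n$ and $bq_n$, and then obtains an \emph{exact} inequality with no error at all:
\[
\mu\bigl((ap_n)(bq_n)\bigr)=\mu\bigl(|a|p_n b q_n\bigr)=\mu\bigl(p_n(|a|b)q_n\bigr)\le\mu(|a|b)=\mu(ab),
\]
using the identity $\mu(ax)=\mu(|a|x)$ together with $[\,|a|,p_n]=0$ and $[\,b,q_n]=0$; the same computation gives $\mu\bigl((bq_n)(ap_n)\bigr)=\mu\bigl(q_n(ba)p_n\bigr)\le\mu(ba)$. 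No limiting or dominated-convergence argument is needed on the right: each truncation is already submajorized by $ab$ (resp.\ $ba$) term by term, and Fatou on the left closes the proof for both (i) and (ii) simultaneously.

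Your two-sided truncation $a_n=q_naq_n$ obscures this. In part (i), since $a=a^*$, your $q_n=e^{|a|}[0,n]$ is a spectral projection of $a$ and commutes with it, so $a_nb_n=q_n(ab)p_n$ holds \emph{exactly} and the alleged ``$[a,q_n]$-type commutator errors'' and the ``$o(1)$'' simply do not occur; the clean inequality $\mu(a_nb_n)\le\mu(ab)$ is already there, and the detour via error estimates is a red herring. In part (ii), however, the situation is worse: $q_n$ does not commute with a non-self-adjoint $a$, so $q_naq_nbp_n$ really does differ from $q_n(ab)p_n$ by $-q_na(1-q_n)bp_n$, and this error only tends to $0$ in measure — you offer no reason why $\int_0^t\mu\bigl(s;q_na(1-q_n)bp_n\bigr)\,ds\to 0$ (the factor $a(1-q_n)$ is the unbounded ``tail'' of $a$, and measure convergence alone does not give Hardy--Littlewood convergence). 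Likewise, $\mu(b_na_n)=\mu(p_nbq_naq_n)$ leaves $q_n$ trapped between $b$ and $a$, and cannot be reduced to $\mu(ba)$ by the projection-monotonicity of $\mu$ alone. This is precisely the obstruction that the paper's choice $a_n=ap_n$ together with the $|a|$-trick is designed to avoid, and your proposal does not supply an alternative control for it.
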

\begin{proof} We prove the second assertion. Let $p_n=ae^{|a|}[0,n)$ and let $q_n=e^{b}[0,n).$ The operators $ap_n$ and $q_nbq_n=bq_n$ are bounded and evidently,
$ap_n\to a$ and $bq_n\to b$ in measure as $n\to\infty.$ Hence, $(bq_n)^{\theta}\to b^{\theta}$ in measure (see e.g. \cite{Tik}) and, therefore,
$$(bq_n)^{\theta}(ap_n)(bq_n)^{1-\theta}\to b^{\theta}ab^{1-\theta}$$
in measure. By Lemma \ref{fk1 lemma}, we have
\begin{equation}\label{eds}
\mu((bq_n)^{\theta}(ap_n)(bq_n)^{1-\theta})\to\mu(b^{\theta}ab^{1-\theta})
\end{equation}
almost everywhere. It follows now from Fatou lemma that
$$\int_0^t\mu(s;b^{\theta}ab^{1-\theta})ds\leq\liminf_{n\to\infty}\int_0^t\mu(s;(bq_n)^{\theta}(ap_n)(bq_n)^{1-\theta})ds.$$
By Lemma \ref{bounded case}, we have
$$\int_0^t\mu(s;b^{\theta}ab^{1-\theta})ds\leq\liminf_{n\to\infty}\int_0^t\max\{\mu(s;(bq_n)(ap_n)),\mu(s;(ap_n)(bq_n))\}ds.$$
Since $|ad|=||a|d|$ for all operators $a,d\in S(\mathcal{M},\tau),$ it follows that
$$\mu((ap_n)(bq_n))=\mu(|a|p_n(bq_n))=\mu(p_n(|a|b)q_n)\leq\mu(|a|b)=\mu(ab).$$
Also, we have
$$\mu((bq_n)(ap_n))=\mu(q_n(ba)p_n)\leq\mu(ba).$$
The assertion follows immediately.
\end{proof}

The result of Theorem \ref{bik th} above extends and complements \cite[Theorems 1 and 2]{bik1}, \cite[Theorem 3 and Corollary 4]{bik2}, \cite[Proposition 3.4]{DDP3}. More details are given in the next section.

We end this section with one more extension of Lemma \ref{invertible case}.

\begin{prop}\label{invertible case3} For any self-adjoint operators
$a,b\in S(\mathcal{M},\tau)$ and every $\theta\in(0,1)$, we have
$$e^{\theta b}e^ae^{(1-\theta)b}\prec\prec e^ae^b.$$
\end{prop}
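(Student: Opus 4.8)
The plan is to reduce Proposition~\ref{invertible case3} to Lemma~\ref{invertible case} (or rather its bounded-operator consequence, Lemma~\ref{bounded case}) by a substitution followed by an approximation argument that removes the unboundedness of $a$ and $b$. First I would observe that if $a$ and $b$ are both in $\mathcal{M}$ (i.e. bounded and self-adjoint), then $e^a$ and $e^b$ are positive invertible operators in $\mathcal{M}$; applying Lemma~\ref{bounded case}(i) with the positive operator $e^b$ in the role of ``$b$'', the self-adjoint operator $e^a$ in the role of ``$a$'', and exponent $\theta$, one gets immediately
$$(e^b)^{\theta}e^a(e^b)^{1-\theta}\prec\prec e^ae^b,$$
which is exactly the claim since $(e^b)^{\theta}=e^{\theta b}$ and $(e^b)^{1-\theta}=e^{(1-\theta)b}$ by the spectral calculus. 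So the bounded self-adjoint case is essentially free.

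The work is therefore entirely in the passage to general self-adjoint $a,b\in S(\mathcal{M},\tau)$. Here I would truncate: set $a_n=ae^{|a|}[0,n)$ and $b_n=be^{|b|}[0,n)$ (equivalently truncate the spectra of $a$ and $b$ to $[-n,n]$ via spectral projections), so that $a_n,b_n$ are bounded self-adjoint operators and $a_n\to a$, $b_n\to b$ in the measure topology. Then $e^{a_n}\to e^a$ and $e^{b_n}\to e^b$ in measure, and using continuity of $t\mapsto t^\theta$ on the positive cone in measure (the reference \cite{Tik} cited in the proof of Theorem~\ref{bik th} applies), we get $e^{\theta b_n}e^{a_n}e^{(1-\theta)b_n}\to e^{\theta b}e^ae^{(1-\theta)b}$ and $e^{a_n}e^{b_n}\to e^ae^b$ in measure. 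By Lemma~\ref{fk1 lemma} the corresponding singular value functions converge almost everywhere, and then Fatou's lemma gives, for each $t>0$,
$$\int_0^t\mu(s;e^{\theta b}e^ae^{(1-\theta)b})\,ds\leq\liminf_{n\to\infty}\int_0^t\mu(s;e^{\theta b_n}e^{a_n}e^{(1-\theta)b_n})\,ds\leq\liminf_{n\to\infty}\int_0^t\mu(s;e^{a_n}e^{b_n})\,ds,$$
where the last inequality is the bounded case established in the previous paragraph. It remains to bound the right-hand side by $\int_0^t\mu(s;e^ae^b)\,ds$, which follows from the measure convergence $e^{a_n}e^{b_n}\to e^ae^b$, Lemma~\ref{fk1 lemma}, and (if one wants a clean $\limsup\le$ in the other direction) the fact that submajorization integrals are continuous under a.e.\ convergence of singular value functions when a uniform integrability condition holds — but actually for the inequality we only need $\mu(e^{a_n}e^{b_n})\to\mu(e^ae^b)$ a.e.\ plus Fatou in the reverse direction, which is immediate once one notes $\int_0^t\mu(s;e^ae^b)\,ds=\lim_n\int_0^t\mu(s;e^{a_n}e^{b_n})\,ds$ by dominated/monotone considerations on the truncations. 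Setting $z=\theta$ concludes the argument.

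The main obstacle I anticipate is the joint measure-continuity of the map $(a,b)\mapsto e^{\theta b}e^ae^{(1-\theta)b}$ along the truncations, and in particular justifying that $e^{a_n}\to e^a$ in measure when $a$ is \emph{unbounded}: unlike the bounded functional calculus, $t\mapsto e^t$ is not uniformly continuous, so one must argue via the spectral projections directly (on the projection $e^{|a|}[0,n)$ the operators $e^{a_n}$ and $e^a$ agree, and the complementary projection has small trace for the purpose of the measure topology, since $a\in S(\mathcal{M},\tau)$ forces $\tau(e^{|a|}(\lambda,\infty))<\infty$ for large $\lambda$ — note here one does not even need $\tau$-finiteness of the tails, just that the truncation projections increase strongly to $\mathbf 1$ along a sequence reducing $1-p_n$ to a finite-trace projection). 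A secondary subtlety is that the products $e^{a_n}e^{b_n}$ need not lie in $(L_1+L_\infty)(\mathcal{M},\tau)$, but as remarked before Theorem~\ref{bik th}, submajorization $\prec\prec$ is trivially satisfied (both sides infinite) unless the relevant integrals are finite, so this causes no real difficulty; one simply notes that if $e^ae^b\notin(L_1+L_\infty)$ the claim is vacuous, and otherwise the truncation integrals are dominated appropriately.
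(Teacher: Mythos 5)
Your proposal takes essentially the same route as the paper: establish the bounded self-adjoint case from the exponential lemma (your appeal to Lemma~\ref{bounded case} and the paper's direct appeal to Lemma~\ref{invertible case} with $e^{a_n}$ in the role of ``$a$'' are the same step, one logarithm apart), then truncate $a$ and $b$ by the spectral projections $p_n=e^{|a|}[0,n)$, $q_n=e^{|b|}[0,n)$ and pass to the limit via convergence in measure, Lemma~\ref{fk1 lemma}, and Fatou. You also correctly isolate the two technical points needing care: measure convergence of $e^{a_n}\to e^a$, and the reduction to the case $e^ae^b\in(L_1+L_\infty)(\mathcal M,\tau)$.

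The one spot where your write-up is too quick (the paper is equally terse, saying only ``the same argument as in the proof of Theorem~\ref{bik th} completes the proof'') is the closing inequality $\liminf_n\int_0^t\mu(s;e^{a_n}e^{b_n})\,ds\le\int_0^t\mu(s;e^ae^b)\,ds$. In Theorem~\ref{bik th} the analogous step rests on the pointwise bound $\mu(ap_n\cdot bq_n)=\mu(p_n(|a|b)q_n)\le\mu(ab)$, which is available because $ap_n$ is literally $a$ compressed by a projection. Here $e^{a_n}$ is \emph{not} $p_ne^a$; it equals $e^ap_n+(1-p_n)$, and the extra summand $(1-p_n)$ (coming from $e^0=1$ on the discarded spectral subspace) means that $\mu(e^{a_n})$ can exceed $\mu(e^a)$ wherever $a$ is very negative, so neither a pointwise domination nor any evident monotonicity gives the bound for fixed $n$ — ``dominated/monotone considerations'' do not discharge the step as stated, and one must genuinely exploit $\tau(1-p_n),\tau(1-q_n)\to 0$. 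The painless repair, which also renders the truncation argument unnecessary, is to observe that Theorem~\ref{bik th}(i) has already been proved for unbounded operators: since $e^a,e^b\in S(\mathcal M,\tau)$ by the cited lemma, $e^b\ge 0$ and $e^a=(e^a)^*$, and one may assume $e^ae^b\in(L_1+L_\infty)(\mathcal M,\tau)$, Theorem~\ref{bik th}(i) applied with $e^a$ in the role of $a$ and $e^b$ in the role of $b$ gives $(e^b)^\theta e^a(e^b)^{1-\theta}=e^{\theta b}e^ae^{(1-\theta)b}\prec\prec e^ae^b$ immediately.
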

\begin{proof} By \cite[Lemma 3.1]{PS}, we have $e^a,e^b\in
S(\mathcal{M},\tau)$. It is sufficient to prove the assertion only
for the case $e^ae^b\in(L_1+L_{\infty})(\mathcal{M},\tau)$.
Introducing projections $p_n:=e^{|a|}[0,n)$, $q_n:=e^{|b|}[0,n)$,
and operators $a_n:=ap_n, b_n:=bq_n$ we obtain from Lemma
\ref{invertible case} that
$$e^{\theta b_n}e^{a_n}e^{(1-\theta)b_n}\prec\prec e^{a_n}e^{b_n},\ n\ge 1.$$
The same argument as in the proof of Theorem \ref{bik th}
completes the proof.
\end{proof}

\section{Applications to ideals in $S(\mathcal{M},\tau)$}

The best known examples of normed $\mathcal{M}$-bimodules of
$S(\mathcal{M},\tau)$ are given by the so-called symmetric
operator spaces (see e.g. \cite{DDP0, SC, KS,DPS}). We briefly recall
relevant definitions (for more detailed information we refer to
\cite{KS} and references therein).

Let $E$  be a Banach space of real-valued Lebesgue measurable
functions either on $(0,1)$ or $(0,\infty)$ (with identification
$m-$a.e.) or on $\mathbb{N}$. The space $E$ is said to be {\it
absolutely solid} if $x\in E$ and $|y|\leq |x|$, $y\in L_0$
implies that $y\in E$ and $||y||_E\leq||x||_E.$

The absolutely solid space $E\subseteq S$ is said to be {\it
symmetric} if for every $x\in E$ and every $y$ the assumption
$\mu(y)=\mu(x)$ implies that $y\in E$ and $||y||_E=||x||_E$ (see e.g.
\cite{KPS}).

If $E=E(0,1)$ is a symmetric space on $(0,1),$ then
$$L_{\infty}\subseteq E\subseteq L_1.$$

If $E=E(0,\infty)$ is a symmetric space on $(0,\infty),$ then
$$L_1\cap L_{\infty}\subseteq E\subseteq L_1+L_{\infty}.$$

If $E=E(\mathbb{N})$ is a symmetric space on $\mathbb{N},$ then
$$l_1\subseteq E\subseteq l_{\infty},$$
where $l_1$ and $l_\infty$ are classical spaces of all
absolutely summable and bounded sequences respectively.

\begin{defi}\label{opspace}
Let $\mathcal{E}$ be a linear subset in $S({\mathcal{M}, \tau})$
equipped with a norm $\|\cdot\|_{\mathcal{E}}$. We say that
$\mathcal{E}$ is a \textit{symmetric operator space} (on
$\mathcal{M}$, or in $S({\mathcal{M}, \tau})$) if $x\in
\mathcal{E}$ and every $y\in S({\mathcal{M}, \tau})$ the
assumption $\mu(y)\leq \mu(x)$ implies that $y\in \mathcal{E}$ and
$\|y\|_\mathcal{E}\leq \|x\|_\mathcal{E}$.
\end{defi}

The fact that every symmetric operator space $\mathcal{E}$ is (an
absolutely solid) $\mathcal{M}$-bimodule of $S\left(\mathcal{M},
\tau \right)$ is well known (see e.g. \cite{SC, KS} and references
therein). In the special case, when $\mathcal{M}=B(H)$ and $\tau$
is a standard trace ${\rm Tr}$, the notion of symmetric operator
space introduced in Definition \ref{opspace} coincides with the
notion of symmetric operator ideal \cite{GK1, GK2, Schatten,
Simon}.

\begin{defi}\label{opideal}
A linear subspace ~$\mathcal I$~ in the von Neumann algebra
$\mathcal M$ equipped with a norm $\|\cdot\|_{\mathcal I}$ is said
to be {\it a symmetric operator ideal} if
\begin{enumerate}
\item $\| S\|_{\mathcal I}\geq \| S\|$ for all $S\in \mathcal I.$
\item $\| S^*\|_{\mathcal I} = \| S\|_{\mathcal I}$ for all $S\in \mathcal I.$
\item $\| ASB\|_{\mathcal I}\leq \| A\| \:\| S\|_{\mathcal I}\| B\|$
for all $S\in \mathcal I$, $A,B\in \mathcal M$.
\end{enumerate}
\end{defi}

There exists a strong connection between symmetric function and
operator spaces recently exposed in \cite{KS} (see earlier results
in \cite{Schatten, GK1, GK2, Simon}).

Let $E$ be a symmetric function space on the interval $(0,1)$
(respectively, on the semi-axis) and let $\mathcal{M}$ be a type
$II_1$ (respectively, $II_{\infty}$) von Neumann algebra. Define
$$E(\mathcal{M},\tau):=\{x\in S(\mathcal{M},\tau):\ \mu(x)\in E\},
\ \|x\|_{E(\mathcal{M},\tau)}:=\|\mu(x)\|_E.$$ Main results of
\cite{KS} assert that $(E(\mathcal{M},\tau),
\|\cdot\|_{E(\mathcal{M},\tau)})$ is a symmetric operator space.
Similarly, if
$E=E(\mathbb{N})$ is a symmetric sequence space on $\mathbb{N},$
and the algebra $\mathcal {M}$ is a type $I$ factor with standard
trace, then (see \cite{KS}) setting
$$\mathcal{E}:=\{x\in \mathcal{M}:\ (\mu(n;x))_{n\geq0}\in E\},
\ \|x\|_{\mathcal{E}}:=\|(\mu(n;x))_{n\geq0}\|_E$$ yields a
symmetric operator ideal. Conversely, every symmetric operator
ideal $\mathcal{E}$ in $\mathcal {M}$ defines a unique symmetric
sequence space $E=E(\mathbb{N})$ by setting
$$
E:=\{a=(a_n)_{n\geq0}\in l_\infty:\
\mu(a)=(\mu(n;x))_{n\geq0}\ \text{for some}\ x\in \mathcal{E}\},\
\|a\|_E:=\|x\|_{\mathcal{E}}
$$

Finally, a symmetric space $E(\mathcal{M},\tau)$ is called fully
symmetric if for every $a\in E(\mathcal{M},\tau)$ and every
$b\in(L_1+L_{\infty})(\mathcal{M})$ with $b\prec\prec a,$ we have
$b\in E(\mathcal{M},\tau)$ and $\|b\|_E\leq\|a\|_E$. The following
result now follows immediately from Theorem \ref{bik th}.

\begin{cor}\label{firstcor} Let $E$ be a fully symmetric function space on
$(0,\tau(\mathbf{1}))$. If $a,b\in S(\mathcal{M},\tau)$ are such
operators that $b\geq0$ and $ab, ba\in E(\mathcal{M},\tau)$, then
$b^{\theta}ab^{1-\theta}\in E(\mathcal{M},\tau)$ for every
$\theta\in(0,1)$. In particular, if $a$ is self-adjoint, then
$$ab\in E(\mathcal{M},\tau)\Longrightarrow b^{\theta}ab^{1-\theta}\in
E(\mathcal{M},\tau),\ \forall \theta\in(0,1)$$ and
$\|b^{\theta}ab^{1-\theta}\|_{E(\mathcal{M},\tau)}\leq
\|ab\|_{E(\mathcal{M},\tau)}$.
\end{cor}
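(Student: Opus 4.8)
The plan is to deduce Corollary~\ref{firstcor} directly from Theorem~\ref{bik th}, treating the general case first and then specialising. Fix $a,b\in S(\mathcal{M},\tau)$ with $b\ge 0$ and $ab,ba\in E(\mathcal{M},\tau)$. Since $E(\mathcal{M},\tau)$ is a symmetric space on $(0,\tau(\mathbf 1))$, it embeds into $(L_1+L_\infty)(\mathcal{M},\tau)$, so in particular $ab\in(L_1+L_\infty)(\mathcal{M},\tau)$ and the hypothesis of Theorem~\ref{bik th} is met. Part~(ii) of that theorem then gives
$$b^{\theta}ab^{1-\theta}\prec\prec \max\{\mu(ab),\mu(ba)\}$$
for every $\theta\in(0,1)$.

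The next step is to recognise the right-hand side as the singular value function of an element of $E(\mathcal{M},\tau)$. Indeed, set $f:=\max\{\mu(ab),\mu(ba)\}\in L_0(0,\tau(\mathbf 1))$. Since $\mu(ab),\mu(ba)\le f$ pointwise and both $\mu(ab),\mu(ba)\in E$, the fact that $E$ is a Banach function lattice (absolutely solid) forces $f\in E$, with $\|f\|_E\le \max\{\|\mu(ab)\|_E,\|\mu(ba)\|_E\}\cdot(\text{something})$ — more carefully, $f\le \mu(ab)+\mu(ba)$, hence $\|f\|_E\le \|ab\|_{E(\mathcal{M},\tau)}+\|ba\|_{E(\mathcal{M},\tau)}<\infty$. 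Now realise $f$ as $\mu(y)$ for some $y\in S(\mathcal{M},\tau)$: in the atomless setting one may take $y$ to be the multiplication operator by $f$ in a commutative subalgebra, or simply invoke the definition of $E(\mathcal{M},\tau)$, which only depends on the function $f$ through $\mu$. Thus $b^{\theta}ab^{1-\theta}\prec\prec \mu(y)$ with $y\in E(\mathcal{M},\tau)$, and since $b^{\theta}ab^{1-\theta}\in(L_1+L_\infty)(\mathcal{M},\tau)$ (because $\mu(b^\theta a b^{1-\theta})$ has finite integral on every bounded interval by the submajorisation just obtained), the full symmetry of $E(\mathcal{M},\tau)$ yields $b^{\theta}ab^{1-\theta}\in E(\mathcal{M},\tau)$.

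For the ``in particular'' clause, assume in addition that $a=a^*$. Then Theorem~\ref{bik th}(i) upgrades the submajorisation to $b^{\theta}ab^{1-\theta}\prec\prec ab$, and full symmetry of $E(\mathcal{M},\tau)$ gives not only membership but also the sharp norm estimate
$$\|b^{\theta}ab^{1-\theta}\|_{E(\mathcal{M},\tau)}\le \|ab\|_{E(\mathcal{M},\tau)},$$
which is precisely the definition of $E(\mathcal{M},\tau)$ being fully symmetric applied to the pair $(b^\theta a b^{1-\theta},\,ab)$.

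I expect the only non-routine point to be verifying that the statement ``$E(\mathcal{M},\tau)$ is fully symmetric'' follows from ``$E$ is a fully symmetric function space''; this is standard (it is the operator-valued counterpart recorded in \cite{KS, DPS} and amounts to observing $b\prec\prec a \iff \mu(b)\prec\prec\mu(a)$ together with the definition of the operator-space norm via $\mu$), but it should be cited rather than reproved. Everything else is a direct substitution into Theorem~\ref{bik th} and the lattice property of $E$, so no genuine obstacle arises; the corollary is, as the text says, immediate.
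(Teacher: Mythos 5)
Your argument is correct and is precisely the route the paper itself intends: the paper gives no separate proof of Corollary \ref{firstcor} beyond stating that it "follows immediately from Theorem \ref{bik th}," and your two-step reduction (apply Theorem \ref{bik th}(ii), then bound $\max\{\mu(ab),\mu(ba)\}$ by $\mu(ab)+\mu(ba)\in E$ and invoke full symmetry; then apply Theorem \ref{bik th}(i) with $a=a^*$ for the norm bound) is exactly that. One small simplification worth noting: you do not need to realise $\max\{\mu(ab),\mu(ba)\}$ as $\mu(y)$ for an operator $y$ — since $E(\mathcal{M},\tau)=\{x:\mu(x)\in E\}$, the full symmetry of the \emph{function} space $E$ applied to $\mu(b^{\theta}ab^{1-\theta})\prec\prec\mu(ab)+\mu(ba)\in E$ already yields $\mu(b^{\theta}ab^{1-\theta})\in E$ directly.
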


We shall now present some variation of the result above. For simplicity of the exposition, we shall do so for fully symmetric sequence spaces $E$ and for symmetric operator ideals $\mathcal{E}$, although  all arguments below can be repeated also for general semifinite factors.

\begin{cor}\label{secondcor} Fix a fully symmetric operator ideal $\mathcal{E}$ and
let $a, b_0, b_1\in B(H)$, $b_0, b_1\ge 0$, $\theta\in(0,1)$.

\begin{enumerate}[(i)]
\item If $ab_0, b_1a\in \mathcal{E}$, then $b_0^{\theta}ab_1^{1-\theta},b_1^{\theta}ab_0^{1-\theta}\in \mathcal{E}$ and
$$b_1^{\theta}ab_0^{1-\theta}\prec\prec \max \{\mu(ab_0),\ \mu(b_1a)\}.$$
In particular, $b_1^{\theta}ab_0^{1-\theta}\in\mathcal{E}.$
\item If $b_0a, ab_1\in \mathcal{E}$, then
$$\mu(b_0^{\theta}ab_1^{1-\theta})\oplus\mu(b_1^{\theta}a^*b_0^{1-\theta})\prec\prec \mu(ab_1)\oplus\mu (b_0a).$$
In particular, if $a=a^*$ and $\theta=1/2$, we have
$$\sigma_2(\mu(b_0^{1/2}ab_1^{1/2})\prec\prec \mu(ab_1)\oplus\mu(ab_0).$$
Here $\sigma_2(a_0,a_1,\cdots)=(a_0,a_0,a_1,a_1,\cdots).$
\end{enumerate}
\end{cor}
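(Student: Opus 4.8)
The plan is to reduce Corollary \ref{secondcor} to the two-sided estimate already available in Lemma \ref{invertible case2} and Theorem \ref{bik th} by the classical trick of passing to a $2\times 2$ matrix amplification. First I would treat part (i). Consider the von Neumann algebra $M_2(B(H))=B(H\oplus H)$ with the trace ${\rm Tr}\otimes\tau_{M_2}$, and form
$$
A=\begin{pmatrix} 0 & a\\ 0 & 0\end{pmatrix},\qquad
B=\begin{pmatrix} b_1 & 0\\ 0 & b_0\end{pmatrix}\ge 0.
$$
Then $B^{\theta}AB^{1-\theta}=\begin{pmatrix} 0 & b_1^{\theta}ab_0^{1-\theta}\\ 0 & 0\end{pmatrix}$, while $AB=\begin{pmatrix} 0 & ab_0\\ 0 & 0\end{pmatrix}$ and $BA=\begin{pmatrix} 0 & b_1a\\ 0 & 0\end{pmatrix}$. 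Since $\mu$ of an off-diagonal corner equals $\mu$ of the corner entry, part (ii) of Theorem \ref{bik th} applied to $A,B$ in $M_2(B(H))$ gives exactly
$$
b_1^{\theta}ab_0^{1-\theta}\prec\prec\max\{\mu(ab_0),\ \mu(b_1a)\},
$$
and the membership $b_1^{\theta}ab_0^{1-\theta}\in\mathcal{E}$ follows from full symmetry of $\mathcal{E}$ together with $ab_0,b_1a\in\mathcal{E}$, since the pointwise maximum of $\mu(ab_0)$ and $\mu(b_1a)$ lies in the fully symmetric space generated by $\mathcal{E}$ (it is dominated by $\mu(ab_0)+\mu(b_1a)$). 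The symmetric statement for $b_0^{\theta}ab_1^{1-\theta}$ is obtained by swapping the roles of $b_0$ and $b_1$ (equivalently, interchanging the two diagonal corners of $B$).

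For part (ii) I would again amplify, this time choosing the block operator whose off-diagonal structure simultaneously produces the two summands in the direct sum. The natural choice is
$$
A=\begin{pmatrix} 0 & a\\ a^* & 0\end{pmatrix},\qquad
B=\begin{pmatrix} b_1 & 0\\ 0 & b_0\end{pmatrix}\ge 0,
$$
so that $B^{\theta}AB^{1-\theta}=\begin{pmatrix} 0 & b_1^{\theta}ab_0^{1-\theta}\\ b_0^{\theta}a^*b_1^{1-\theta} & 0\end{pmatrix}$. The singular value function of a $2\times 2$ self-adjoint off-diagonal matrix $\begin{pmatrix}0&x\\ x^*&0\end{pmatrix}$ equals $\mu(x)\oplus\mu(x^*)=\mu(x)\oplus\mu(x)$, but here the two corners are different operators, so one gets $\mu(B^{\theta}AB^{1-\theta})=\mu(b_1^{\theta}ab_0^{1-\theta})\oplus\mu(b_0^{\theta}a^*b_1^{1-\theta})$ after rearrangement. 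Meanwhile $AB=\begin{pmatrix} 0 & ab_0\\ a^*b_1 & 0\end{pmatrix}$ and $BA=\begin{pmatrix} 0 & b_1a\\ b_0a^* & 0\end{pmatrix}$, so $\mu(AB)=\mu(ab_0)\oplus\mu(a^*b_1)$ and $\mu(BA)=\mu(b_1a)\oplus\mu(b_0a^*)$. Since $a^*b_1$ and $b_1a$ have the same singular values (as $\mu(x)=\mu(x^*)$ and $(b_1a)^*=a^*b_1$), and likewise for $b_0a^*$ versus $ab_0$, we get $\max\{\mu(AB),\mu(BA)\}$ controlled by $\mu(ab_1)\oplus\mu(b_0a)$ up to rearrangement; here I use $\mu(ab_1)=\mu((ab_1)^*)=\mu(b_1a^*)$ and must be careful to match the hypotheses $b_0a,ab_1\in\mathcal{E}$ exactly. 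Applying Theorem \ref{bik th}(ii) to this $A,B$ then yields the claimed submajorization. The specialization to $a=a^*$, $\theta=1/2$ makes the two corners equal to $b_1^{1/2}ab_0^{1/2}$ and $b_0^{1/2}ab_1^{1/2}$, whose joint rearrangement is $\sigma_2(\mu(b_0^{1/2}ab_1^{1/2}))$ when one further notes $\mu(b_0^{1/2}ab_1^{1/2})=\mu(b_1^{1/2}ab_0^{1/2})$ follows from $\mu(x)=\mu(x^*)$ applied to $x=b_1^{1/2}a^*b_0^{1/2}$ — wait, that is not immediate, so the honest route is to keep both corners and observe the direct sum of the two equals $\sigma_2$ of one only after checking they have equal singular value functions, which does hold because $(b_1^{1/2}ab_0^{1/2})^*=b_0^{1/2}a^*b_1^{1/2}=b_0^{1/2}ab_1^{1/2}$ in the self-adjoint case.

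The main obstacle, and the step I would write out with care, is the bookkeeping of singular value functions of $2\times 2$ block operators: precisely, the identities $\mu\!\begin{pmatrix}0&x\\0&0\end{pmatrix}=\mu(x)\oplus 0$ and $\mu\!\begin{pmatrix}0&x\\ y&0\end{pmatrix}=$ the decreasing rearrangement of $\mu(x)\oplus\mu(y)$, the behaviour of $\mu$ under the trace-doubling, and the matching of the various products $ab_0,b_1a,ab_1,b_0a$ and their adjoints to the hypotheses via $\mu(z)=\mu(z^*)$. None of this is deep, but it is where all the content of part (ii) sits, since the analytic heart was already supplied by Theorem \ref{bik th}. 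A secondary point requiring a sentence of justification is that $\mathcal{E}$, being a \emph{fully} symmetric operator ideal, absorbs the pointwise maximum and the direct sums appearing above: this is because full symmetry is exactly closure under Hardy--Littlewood submajorization, and $\mu(u)\oplus\mu(v)\prec\prec$ (something in $\mathcal{E}$) whenever $u,v\in\mathcal{E}$, using $\mu(u)\oplus\mu(v)\prec\prec 2(\mu(u)+\mu(v))$ or a direct-sum symmetric-space argument as in \cite{KS}.
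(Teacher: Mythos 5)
Your approach is the same as the paper's: amplify to $B(H\oplus H)$ with block operators and invoke Theorem~\ref{bik th} on the amplified pair. For part~(i) the computation is correct (your $A,B$ are a transposed version of the paper's, with $a$ in the upper-right corner and the $b_i$ swapped on the diagonal; the products line up and the conclusion is exactly the claimed one).

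For part~(ii), however, there is a genuine bookkeeping error that cannot be waved away, and you yourself sense the trouble in the sentence ``must be careful to match the hypotheses $b_0a,ab_1\in\mathcal{E}$ exactly.'' You keep $B=\left(\begin{smallmatrix}b_1&0\\0&b_0\end{smallmatrix}\right)$ from part~(i), which gives $AB=\left(\begin{smallmatrix}0&ab_0\\a^*b_1&0\end{smallmatrix}\right)$, hence $\mu(AB)$ is the rearrangement of $\mu(ab_0)\oplus\mu(a^*b_1)=\mu(ab_0)\oplus\mu(b_1a)$. But the hypotheses of part~(ii) are $b_0a,ab_1\in\mathcal E$, and in general $\mu(ab_0)\neq\mu(b_0a)$ and $\mu(b_1a)\neq\mu(ab_1)$ (the adjoint relation only gives $\mu(ab_0)=\mu(b_0a^*)$, which does not help unless $a=a^*$). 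So with your choice of $B$ you cannot conclude $AB\in\mathcal E$, and the submajorization you obtain is the one with $b_0$ and $b_1$ interchanged, not the one stated. The fix is to take $B=\left(\begin{smallmatrix}b_0&0\\0&b_1\end{smallmatrix}\right)$ in part~(ii) (the paper's choice), which yields $AB=\left(\begin{smallmatrix}0&ab_1\\a^*b_0&0\end{smallmatrix}\right)$ with $\mu(AB)$ the rearrangement of $\mu(ab_1)\oplus\mu(b_0a)$, matching the hypotheses, and $B^\theta AB^{1-\theta}=\left(\begin{smallmatrix}0&b_0^\theta ab_1^{1-\theta}\\b_1^\theta a^*b_0^{1-\theta}&0\end{smallmatrix}\right)$, matching the claimed left-hand side. (A minor stylistic point: with that $B$, since $A=A^*$ it is cleaner to invoke Theorem~\ref{bik th}(i) directly, as the paper does, rather than~(ii) followed by the observation $\mu(AB)=\mu(BA)$.) Once that correction is made, your treatment of the $\theta=1/2$, $a=a^*$ specialization is fine.
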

\begin{proof} (i) In $B(H\oplus H)$ consider the
following operators
$$
\mathbf{a}=\begin{pmatrix}
0&0\\
a&0
\end{pmatrix},\quad
\mathbf{b}=\begin{pmatrix}
b_0&0\\
0&b_1
\end{pmatrix}.
$$
We obviously have $\mathbf{b}\ge 0$ and $\mu
(\mathbf{a}\mathbf{b})=\mu (ab_0)$, $\mu
(\mathbf{b}\mathbf{a})=\mu (b_1a)$ and therefore
$\mathbf{a}\mathbf{b}, \mathbf{b}\mathbf{a}\in \mathcal{E}$.
Applying now Theorem \ref{bik th} and Corollary \ref{firstcor} we
arrive at
$$
\begin{pmatrix}
0&0\\
b_1^{\theta}ab_0^{1-\theta}&0
\end{pmatrix}
\prec\prec \max \left\{\mu\left(\begin{pmatrix}
0&0\\
ab_0&0
\end{pmatrix}\right),\ \mu \left(\begin{pmatrix}
0&0\\
b_1a&0
\end{pmatrix}\right)\right\},$$
which is the assertion.

(ii) Consider $\mathbf{b}$ as above and set
$$
\mathbf{a}:=\begin{pmatrix}
0&a\\
a^*&0
\end{pmatrix}
$$
Observe that $\mathbf{a}$ is self-adjoint and that the assumption
guarantees $\mathbf{a}\mathbf{b}\in \mathcal{E}$. Thus, by Theorem
\ref{bik th} we have
$$\mathbf{b}^{\theta}\mathbf{a}\mathbf{b}^{1-\theta}\prec\prec
\mathbf{a}\mathbf{b},
$$
that is
$$
\begin{pmatrix}
0&b_0^{\theta}ab_1^{1-\theta}\\
b_1^{\theta}a^*b_0^{1-\theta}&0
\end{pmatrix}
\prec\prec
\begin{pmatrix}
0&ab_1\\
a^*b_0&0
\end{pmatrix}
$$
which is equivalent to the first assertion. The last assertion in (ii) is trivial.
\end{proof}

The following lemma extends result of \cite[Lemma 27]{cps} and \cite[Lemma 10]{ps}.

\begin{lem} Let $\mathcal{E}$ be a fully symmetric operator ideal. If $a,b_0,b_1\in B(H)$ are such that $b_0,b_1\geq0$ and $b_0a,ab_1\in\mathcal{E},$ then
$$\|b_0^{\theta}ab_1^{1-\theta}\|_{\mathcal{E}}\leq\|b_0a\|_{\mathcal{E}}^{\theta}\|ab_1\|_{\mathcal{E}}^{1-\theta},\quad 0<\theta<1.$$
\end{lem}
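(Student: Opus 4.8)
The plan is to interpolate the map $\theta \mapsto b_0^{\theta} a b_1^{1-\theta}$ along a complex strip, exactly as in the proof of Lemma~\ref{invertible case}, but now measuring sizes in the norm $\|\cdot\|_{\mathcal{E}}$ of the fully symmetric operator ideal rather than in the single functional $\int_0^t \mu(s;\cdot)\,ds$. First I would reduce to the case where $b_0$ and $b_1$ are positive and invertible: replace $b_i$ by $b_i + 1/n$, note that $(b_0+1/n)a \to b_0 a$ and $a(b_1+1/n) \to ab_1$ in measure (indeed in $\mathcal{E}$, using that $\mathcal{E}$ is a symmetric ideal and $a$ is bounded), and that $(b_0+1/n)^\theta a (b_1+1/n)^{1-\theta} \to b_0^\theta a b_1^{1-\theta}$ in measure; then Fatou together with the fact that $\mathcal{E}$ has the Fatou property as a fully symmetric space lets me pass the desired inequality to the limit, provided the right-hand side converges, which it does since $\|(b_0+1/n)a\|_{\mathcal{E}} \to \|b_0 a\|_{\mathcal{E}}$ and similarly for the other factor. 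So assume $b_0, b_1$ invertible and set $c_i = \log b_i \in \mathcal{M}$ self-adjoint.

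Next, for a fixed contraction $d \in \mathcal{M}$ with $\tau(s(d)) \le t$, consider the holomorphic function $F(z) = \tau\big(e^{z c_0}\, a\, e^{(1-z) c_1}\, d\big)$ on the strip $0 \le \Re z \le 1$; as in Lemma~\ref{invertible case} it is bounded there. On the line $\Re z = 0$, write $e^{iyc_0} a e^{c_1} e^{-iyc_1} d$ and estimate $|F(iy)| \le \tau\big(|a e^{c_1} e^{-iyc_1} d e^{iyc_0}|\big)$; since $e^{-iyc_1}, e^{iyc_0}$ are unitaries and $d$ a contraction, $\mu(\text{inner product}) \le \mu(ab_1)$, hence $|F(iy)| \le \int_0^t \mu(s; ab_1)\,ds$. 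On $\Re z = 1$ one gets $|F(1+iy)| \le \int_0^t \mu(s; b_0 a)\,ds$ in the same way. By the Hadamard three-lines theorem in its interpolated (logarithmically convex) form, $|F(\theta + iy)| \le \big(\int_0^t \mu(s;b_0 a)\,ds\big)^{\theta} \big(\int_0^t \mu(s; ab_1)\,ds\big)^{1-\theta}$ for every $t>0$. Taking the supremum over all such $d$ and invoking Lemma~\ref{Ben} yields
$$
\int_0^t \mu(s; b_0^\theta a b_1^{1-\theta})\,ds \;\le\; \Big(\int_0^t \mu(s;b_0 a)\,ds\Big)^{\theta}\Big(\int_0^t \mu(s; ab_1)\,ds\Big)^{1-\theta}, \qquad t>0.
$$

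Finally I would pass from this family of Hölder-type inequalities on the $K$-functionals to the norm inequality. The point is that the fully symmetric norm $\|\cdot\|_{\mathcal{E}}$ is, by the Calderón–Mityagin description, a monotone function of the family $\{\int_0^t \mu(s;\cdot)\,ds\}_{t>0}$, and more precisely for a fully symmetric space one has $\|x\|_{\mathcal{E}} = \sup\{ \int_0^\infty \mu(s;x)\mu(s;y)\,ds : \|y\|_{\mathcal{E}^\times}\le 1\}$ via Köthe duality, or equivalently $\|x\|_{\mathcal{E}} \le \|z\|_{\mathcal{E}}$ whenever the $K$-functional of $x$ is dominated by a Hölder-geometric mean of the $K$-functionals of two elements whose $\mathcal{E}$-norms are controlled. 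Concretely: if $f, g \in L_1 + L_\infty$ with $\int_0^t \mu(f) \le A^\theta$-type bound above, then because $s \mapsto s^\theta$-weighted interpolation of the Calderón space gives $\mu(b_0^\theta a b_1^{1-\theta}) \prec\prec $ an explicit function built from $\mu(b_0 a)$ and $\mu(ab_1)$ whose $\mathcal{E}$-norm is at most $\|b_0 a\|_{\mathcal{E}}^\theta \|ab_1\|_{\mathcal{E}}^{1-\theta}$; then full symmetry of $\mathcal{E}$ closes the argument. I expect this last step — extracting the clean norm bound $\|b_0^\theta a b_1^{1-\theta}\|_{\mathcal{E}} \le \|b_0 a\|_{\mathcal{E}}^\theta \|ab_1\|_{\mathcal{E}}^{1-\theta}$ from the pointwise-in-$t$ Hölder inequality for $K$-functionals — to be the main obstacle; it is most cleanly handled by Köthe duality for fully symmetric spaces (pairing against a norming $y$ in the Köthe dual and applying Hölder's inequality for the integral $\int_0^\infty \mu(b_0^\theta a b_1^{1-\theta})\,\mu(y)$ after a further three-lines argument with $y$ inserted), which reduces everything to the scalar Hölder inequality and the already-established case of Lemma~\ref{invertible case}.
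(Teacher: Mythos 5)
Your first two-thirds of the argument — reducing to invertible $b_0,b_1$, running the three-lines theorem on $F(z)=\tau(b_0^z a b_1^{1-z} d)$ against a contraction $d$ with small-support, and invoking Lemma~\ref{Ben} to conclude
$$\int_0^t \mu(s;b_0^\theta a b_1^{1-\theta})\,ds \;\le\; \Bigl(\int_0^t\mu(s;b_0a)\,ds\Bigr)^\theta\Bigl(\int_0^t\mu(s;ab_1)\,ds\Bigr)^{1-\theta}$$
for all $t>0$ — is exactly the paper's argument (the paper compresses it to ``repeating the argument of Lemma~\ref{invertible case}''). Where you go astray is the final step. Full symmetry of $\mathcal{E}$ lets you pass from $x\prec\prec z$ to $\|x\|_{\mathcal{E}}\le\|z\|_{\mathcal{E}}$, but the bound you have on the left side above is a \emph{nonlinear} (H\"older-type) combination of the two $K$-functionals, not the $K$-functional of any single element of controlled norm; so you cannot invoke ``monotonicity along the family $\{\int_0^t\mu\}$'' directly, and your appeal to Calder\'on--Mityagin / K\"othe duality is a genuine detour that does not close cleanly — inserting a dual element $y$ into the three-lines argument would require linking $y$ to an operator $y'$ with $\mu(y')=\mu(y)$ that simultaneously attains the rearrangement pairing with the \emph{unknown} operator $b_0^\theta a b_1^{1-\theta}$, and no such $y'$ is produced. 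This is the gap.

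The paper bridges it with a one-line elementary trick that you missed. Apply Young's inequality $\alpha^\theta\beta^{1-\theta}\le\theta\alpha+(1-\theta)\beta$ to the (rescaled) $K$-functionals: for every $\lambda>0$,
$$\int_0^t\mu(s;b_0^\theta a b_1^{1-\theta})\,ds \;\le\; \theta\lambda^{1-\theta}\int_0^t\mu(s;b_0a)\,ds + (1-\theta)\lambda^{-\theta}\int_0^t\mu(s;ab_1)\,ds.$$
This \emph{is} a genuine submajorization, namely $b_0^\theta a b_1^{1-\theta}\prec\prec \theta\lambda^{1-\theta}\mu(b_0a)+(1-\theta)\lambda^{-\theta}\mu(ab_1)$, so full symmetry gives $\|b_0^\theta a b_1^{1-\theta}\|_{\mathcal{E}}\le\theta\lambda^{1-\theta}\|b_0a\|_{\mathcal{E}}+(1-\theta)\lambda^{-\theta}\|ab_1\|_{\mathcal{E}}$, and optimizing over $\lambda$ (take $\lambda=\|ab_1\|_{\mathcal{E}}/\|b_0a\|_{\mathcal{E}}$) yields the clean geometric-mean bound. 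No duality, no Fatou property, and no trilinear argument is needed; replace your last paragraph with this linearize-and-optimize step.
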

\begin{proof} Setting $F(z)=\tau(b_0^zab_1^{1-z}c)$ and repeating the argument in Lemma \ref{invertible case}, we obtain
$$\int_0^t\mu(s;b_0^{\theta}ab_1^{1-\theta})ds\leq(\int_0^t\mu(s;b_0a)ds)^{\theta}(\int_0^t\mu(s;ab_1)ds)^{1-\theta}.$$
Using elementary inequality $\alpha^{\theta}\beta^{1-\theta}\leq\theta\alpha+(1-\theta)\beta,$ we obtain, for every $\lambda>0,$
$$\int_0^t\mu(s;b_0^{\theta}ab_1^{1-\theta})ds\leq(\int_0^t\mu(s;\lambda^{1-\theta}b_0a)ds)^{\theta}(\int_0^t\mu(s;\lambda^{-\theta}ab_1)ds)^{1-\theta}\leq$$
$$\leq\theta\lambda^{1-\theta}\int_0^t\mu(s;b_0a)ds+(1-\theta)\lambda^{-\theta}\int_0^t\mu(s;ab_1)ds.$$
Since the ideal $\mathcal{E}$ is fully symmetric, it follows that
$$\|b_0^{\theta}ab_1^{1-\theta}\|_{\mathcal{E}}\leq\theta\lambda^{1-\theta}\|b_0a\|_{\mathcal{E}}+(1-\theta)\lambda^{-\theta}\|ab_1\|_{\mathcal{E}}.$$
The assertion follows now by setting
$$\lambda=\|ab_1\|_{\mathcal{E}}\cdot\|b_0a\|_{\mathcal{E}}^{-1}.$$
\end{proof}

Recall that the set
$${L}_p(\mathcal{M},\tau)=\left\{x\in S(\mathcal{M},\tau):\ \tau(|x|^p)<\infty\right\}$$
equipped  with a standard norm
$$\|x\|_p:=\tau(|x|^p)^{1/p}$$
is the noncommutative $L_p$-space associated with
$(\mathcal{M},\tau)$ for every $1\leq p<\infty.$ In the type $I$ factor setting these are the usual Schatten-von Neumann ideals \cite{GK1, GK2,Schatten, Simon}. The following corollary follows immediately from the above result.

\begin{cor}\label{thirdcor} Let $\mathcal{M}$ be a semifinite factor and $a,b_0,b_1\in S(\mathcal{M},\tau)$ be self-adjoint operators such that $b_0,b_1\geq0$ and
such that $ab_0, ab_1\in L_p(\mathcal{M},\tau).$ We have $b_0^{1/2}ab_1^{1/2}\in L_p(\mathcal{M},\tau)$ and
$$2\|b_0^{1/2}ab_1^{1/2}\|_p^p\leq\|ab_1\|_p^p +\|b_0a\|_p^p.$$
\end{cor}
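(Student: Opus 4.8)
The plan is to deduce Corollary \ref{thirdcor} from the preceding Lemma by the same $B(H\oplus H)$-doubling device that was used in Corollary \ref{secondcor}(ii). First I would observe that since $a=a^*$, the operator
$$
\mathbf{a}=\begin{pmatrix} 0 & a\\ a^* & 0\end{pmatrix}
= \begin{pmatrix} 0 & a\\ a & 0\end{pmatrix}
$$
on $\mathcal{M}\otimes M_2(\mathbb{C})$ is self-adjoint, and with $\mathbf{b}=\mathrm{diag}(b_0,b_1)\ge 0$ we have, in the block picture, $\mathbf{b}^{1/2}\mathbf{a}\mathbf{b}^{1/2}$ equal to the off-diagonal matrix with corners $b_0^{1/2}ab_1^{1/2}$ and $b_1^{1/2}ab_0^{1/2}$. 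Hence $\mu(\mathbf{b}^{1/2}\mathbf{a}\mathbf{b}^{1/2})=\mu(b_0^{1/2}ab_1^{1/2})\oplus\mu(b_1^{1/2}ab_0^{1/2})$, while $\mathbf{a}\mathbf{b}$ has corners $ab_1$ and $ab_0$, so $\tau_2(|\mathbf{a}\mathbf{b}|^p)=\tau(|ab_1|^p)+\tau(|ab_0|^p)$; here $\tau_2=\tau\otimes\mathrm{Tr}$ is the amplified trace. Note also $|b_1^{1/2}ab_0^{1/2}|=|(b_0^{1/2}a^*b_1^{1/2})^*|$ has the same singular values as $b_0^{1/2}ab_1^{1/2}$ since $a=a^*$, so $\|\mathbf{b}^{1/2}\mathbf{a}\mathbf{b}^{1/2}\|_p^p=2\|b_0^{1/2}ab_1^{1/2}\|_p^p$.

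Next I would apply the preceding Lemma to the ideal $\mathcal{E}=L_p(\mathcal{M}\otimes M_2(\mathbb{C}),\tau_2)$, which is fully symmetric for $1\le p<\infty$ (being a noncommutative $L_p$-space), with $a=\mathbf{a}$, $b_0=b_1=\mathbf{b}$, and $\theta=1/2$. The hypothesis $ab_0,ab_1\in L_p(\mathcal{M},\tau)$ translates, via the block computation above, into $\mathbf{b}\mathbf{a},\mathbf{a}\mathbf{b}\in\mathcal{E}$ (using $\mathbf{a}=\mathbf{a}^*$ and $\mathbf{b}=\mathbf{b}^*$ so that $\mathbf{b}\mathbf{a}=(\mathbf{a}\mathbf{b})^*$). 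The Lemma then gives
$$
\|\mathbf{b}^{1/2}\mathbf{a}\mathbf{b}^{1/2}\|_{\mathcal{E}}\le \|\mathbf{b}\mathbf{a}\|_{\mathcal{E}}^{1/2}\,\|\mathbf{a}\mathbf{b}\|_{\mathcal{E}}^{1/2}=\|\mathbf{a}\mathbf{b}\|_{\mathcal{E}},
$$
the last equality because $\|\mathbf{b}\mathbf{a}\|_{\mathcal{E}}=\|(\mathbf{a}\mathbf{b})^*\|_{\mathcal{E}}=\|\mathbf{a}\mathbf{b}\|_{\mathcal{E}}$. Raising to the $p$-th power and substituting the two block identities yields $2\|b_0^{1/2}ab_1^{1/2}\|_p^p\le \|ab_1\|_p^p+\|ab_0\|_p^p$; since $a$ is self-adjoint, $\|ab_0\|_p=\|b_0a\|_p$ (taking adjoints), giving exactly the claimed bound, and in particular $b_0^{1/2}ab_1^{1/2}\in L_p(\mathcal{M},\tau)$.

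The main technical point to check carefully is the translation between the amplified algebra $\mathcal{M}\otimes M_2(\mathbb{C})$ and $\mathcal{M}$: one must verify that the ideal of the amplified algebra is again fully symmetric (this is standard, but should be cited), that $\mathbf{a}\in S(\mathcal{M}\otimes M_2,\tau_2)$ when $a\in S(\mathcal{M},\tau)$, and that singular value functions of off-diagonal block matrices decompose as the direct sum of singular values of the corner entries — all of these are routine facts about reduction by matrix amplification, already implicitly invoked in the proof of Corollary \ref{secondcor}. The remaining computations ($\mu(xy)=\mu((xy)^*)$ type identities, and the elementary step of raising the norm inequality to the $p$-th power) are entirely mechanical. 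No genuine obstacle is expected; the content is already in the Lemma, and this corollary is merely its specialization to $L_p$ after the standard $2\times 2$ symmetrization that makes a non-self-adjoint pairing $(b_0,b_1)$ self-adjoint.
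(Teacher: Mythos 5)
Your argument is correct, but it takes a noticeably different route from the author's. The paper derives Corollary~\ref{thirdcor} by applying the preceding Lemma directly to the given triple $(a,b_0,b_1)$ with $\theta=1/2$, yielding $\|b_0^{1/2}ab_1^{1/2}\|_p\leq\|b_0a\|_p^{1/2}\|ab_1\|_p^{1/2}$, and then converting this multiplicative bound to the stated additive one via the arithmetic--geometric mean inequality $\alpha^{1/2}\beta^{1/2}\leq\frac{1}{2}(\alpha+\beta)$ applied with $\alpha=\|b_0a\|_p^p$ and $\beta=\|ab_1\|_p^p$; the asymmetric normalization $\|ab_1\|$ versus $\|b_0a\|$ in the statement is a direct echo of the Lemma's hypotheses. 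You instead run the $2\times2$ amplification already used in Corollary~\ref{secondcor}(ii) and apply the Lemma in $\mathcal{M}\otimes M_2(\mathbb{C})$ with both positive factors set equal to $\mathbf{b}$. Since in that degenerate case $\|\mathbf{b}\mathbf{a}\|_p=\|\mathbf{a}\mathbf{b}\|_p$, your use of the Lemma collapses to the single estimate $\|\mathbf{b}^{1/2}\mathbf{a}\mathbf{b}^{1/2}\|_p\leq\|\mathbf{a}\mathbf{b}\|_p$, which is really Corollary~\ref{firstcor} (equivalently Theorem~\ref{bik th}) in the amplified algebra. So your proof reveals that Corollary~\ref{thirdcor} already follows from the submajorization result of Theorem~\ref{bik th} alone, without the three-lines interpolation refinement encoded in the Lemma: the $2\times2$ symmetrization absorbs the factor $2$ and replaces the AM--GM step. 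The price is the amplification bookkeeping you flag yourself --- full symmetry of $L_p(\mathcal{M}\otimes M_2(\mathbb{C}))$, the identity $\mu(\mathbf{b}^{1/2}\mathbf{a}\mathbf{b}^{1/2})=\mu(b_0^{1/2}ab_1^{1/2})\oplus\mu(b_1^{1/2}ab_0^{1/2})$ for an off-diagonal block operator, and $b_1^{1/2}ab_0^{1/2}=(b_0^{1/2}ab_1^{1/2})^*$ when $a=a^*$ --- all of which you handle correctly. Both proofs leave the same passage implicit: the Lemma is stated for bounded operators, whereas here $a,b_0,b_1\in S(\mathcal{M},\tau)$ in a general semifinite factor; the extension by measure-topology approximation, as in Lemma~\ref{bounded case} and the proof of Theorem~\ref{bik th}, is routine and is suppressed by the author as well.
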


For detailed exposition of (generalized) Golden-Thompson inequality and for further references we refer to \cite{Simon}. The following result now follows immediately from Proposition \ref{invertible case3} and the definition of a fully symmetric space.

\begin{prop}\label{Golden-Thompson} Let $E$ be a fully symmetric function space on $(0,\tau(\mathbf{1}))$.
For any self-adjoint operators
$a,b\in S(\mathcal{M},\tau)$ and every $\theta\in(0,1)$, we have
$$\|e^{\theta b}e^ae^{(1-\theta)b}\|_{E(\mathcal{M},\tau)}\leq\| e^ae^b\|_{E(\mathcal{M},\tau)}.$$
\end{prop}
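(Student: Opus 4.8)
The plan is to deduce Proposition \ref{Golden-Thompson} directly from Proposition \ref{invertible case3} together with the definition of a fully symmetric space, exactly as the paper hints. First I would invoke \cite[Lemma 3.1]{PS} to note that $e^a, e^b \in S(\mathcal{M},\tau)$, so that all operators appearing in the statement are genuinely $\tau$-measurable and the $\mu$-calculus applies. If $e^a e^b \notin (L_1+L_\infty)(\mathcal{M},\tau)$ then $\|e^a e^b\|_{E(\mathcal{M},\tau)} = \infty$ (since every symmetric space on the relevant interval embeds in $L_1 + L_\infty$), and the inequality is trivial; so I may assume $e^a e^b \in (L_1+L_\infty)(\mathcal{M},\tau)$, which is precisely the hypothesis under which Proposition \ref{invertible case3} is proved.

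Next I would apply Proposition \ref{invertible case3} to the self-adjoint operators $a$ and $b$, obtaining
$$e^{\theta b} e^a e^{(1-\theta)b} \prec\prec e^a e^b$$
for the fixed $\theta \in (0,1)$. Now recall the definition given just before Corollary \ref{firstcor}: a symmetric space $E(\mathcal{M},\tau)$ is fully symmetric if for every $a' \in E(\mathcal{M},\tau)$ and every $b' \in (L_1+L_\infty)(\mathcal{M})$ with $b' \prec\prec a'$ one has $b' \in E(\mathcal{M},\tau)$ and $\|b'\|_E \le \|a'\|_E$. Applying this with $a' = e^a e^b \in E(\mathcal{M},\tau)$ and $b' = e^{\theta b} e^a e^{(1-\theta)b}$ — which lies in $(L_1+L_\infty)(\mathcal{M},\tau)$ because it is submajorized by $e^a e^b$ which is in that space — yields at once
$$\|e^{\theta b} e^a e^{(1-\theta)b}\|_{E(\mathcal{M},\tau)} \le \|e^a e^b\|_{E(\mathcal{M},\tau)},$$
which is the desired conclusion.

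There is essentially no obstacle here: the work has all been done in Proposition \ref{invertible case3}, and the present statement is a formal transcription of a submajorization inequality into an ideal-norm inequality via the fully symmetric property. The only point requiring a moment's care is the reduction to the case $e^a e^b \in (L_1 + L_\infty)(\mathcal{M},\tau)$, and the observation that $e^{\theta b} e^a e^{(1-\theta)b} \in (L_1+L_\infty)(\mathcal{M},\tau)$ so that the defining property of a fully symmetric space is applicable; both follow immediately from the submajorization relation and the fact that $\int_0^t \mu(s; e^a e^b)\,ds < \infty$ for every $t$. I would present this as a short two-sentence proof.
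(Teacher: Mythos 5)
Your proposal is correct and follows exactly the paper's (very terse) argument: apply Proposition \ref{invertible case3} to obtain $e^{\theta b}e^ae^{(1-\theta)b}\prec\prec e^ae^b$ and then invoke the definition of a fully symmetric space to pass from submajorization to the norm inequality. The extra care you take with the reduction to $e^ae^b\in(L_1+L_\infty)(\mathcal{M},\tau)$ and with the (trivial) case $e^ae^b\notin E(\mathcal{M},\tau)$ is sound and only makes explicit what the paper leaves implicit.
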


We shall complete this section with a complement to \cite[Theorem
8.3]{Simon}. According to that theorem for self-adjoint operators
$a$ and $b$ and for all $1 \leq p \leq \infty$ we have
$\|e^{a+b}\|_p \leq \|e^{a/2}e^be^{a/2}\|_p$ and for $2 \leq p
\leq\infty$ we have $\|e^{a+b}\|_p \leq \|e^{a}e^b\|_p$. We claim
that the latter estimate holds for all $1 \leq p \leq \infty$.
Indeed, this follows from a combination of the former estimate and
Proposition \ref{Golden-Thompson}.

\section{An application to traces}

Let $\mathcal{E}$ be a (fully) symmetric operator ideal. A linear functional $\varphi$ on $\mathcal{E}$ is said to be a trace if $\varphi(ab)=\varphi(ba)$ for every $a\in \mathcal{E}$ and
every $b\in B(H).$ A complete characterization of symmetric operator ideals which admit a nontrivial trace has been recently given in \cite{SZcrelle}.

For a compact operator $x\in B(H),$ the symbol $\Lambda(x)$ stands for the {\it set} of all sequences of eigenvalues of $x$ counted with algebraic multiplicities and ordered by the inequality $|\lambda_{n+1}(x)|\leq|\lambda_n(x)|.$ The following assertion is a particular case of Theorem 3.10.3 in \cite{BirmanSolomjak}.

\begin{thm}\label{spectral lemma} If $a,b\in B(H)$ and if $a$ is a compact operator, then $\Lambda(ab)=\Lambda(ba).$
\end{thm}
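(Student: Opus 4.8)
The statement to prove is Theorem~\ref{spectral lemma}: if $a,b\in B(H)$ and $a$ is compact, then $\Lambda(ab)=\Lambda(ba)$, i.e.\ $ab$ and $ba$ have the same sequences of nonzero eigenvalues with algebraic multiplicities.

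\textbf{Proof proposal.} The plan is to reduce to the classical fact that for $a,b$ in a \emph{Banach algebra} the nonzero spectra of $ab$ and $ba$ coincide, and then upgrade from equality of nonzero spectra to equality of algebraic multiplicities using compactness of $ab$ (hence of $ba$). First I would record that $ab$ and $ba$ are both compact operators (an operator product with a compact factor is compact), so each has a discrete spectrum with $0$ as the only possible accumulation point, and every nonzero $\lambda$ in the spectrum is an eigenvalue of finite algebraic multiplicity; thus $\Lambda(ab)$ and $\Lambda(ba)$ are well-defined. Next, the standard resolvent identity: for $\lambda\neq 0$, $(\lambda-ab)^{-1}$ exists iff $(\lambda-ba)^{-1}$ exists, via the explicit formula $(\lambda-ba)^{-1}=\lambda^{-1}\bigl(1+b(\lambda-ab)^{-1}a\bigr)$ (and symmetrically). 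Hence the nonzero points of the spectrum of $ab$ and $ba$ coincide as \emph{sets}.

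The main work is then to match algebraic multiplicities. For a fixed nonzero $\lambda$, the algebraic multiplicity equals the rank of the Riesz spectral projection
$$
P_\lambda(ab)=\frac{1}{2\pi i}\oint_{\gamma}(\zeta-ab)^{-1}\,d\zeta,
$$
where $\gamma$ is a small circle around $\lambda$ enclosing no other spectral point; similarly for $ba$. I would substitute the resolvent formula above into the contour integral for $P_\lambda(ba)$ and compute that, up to the contribution at $\zeta=0$ which vanishes since $\gamma$ does not enclose $0$,
$$
P_\lambda(ba)=\frac{1}{2\pi i}\oint_{\gamma} b(\zeta-ab)^{-1}a\,\frac{d\zeta}{\zeta}
= b\Bigl(\tfrac{1}{2\pi i}\oint_{\gamma}\zeta^{-1}(\zeta-ab)^{-1}a\,d\zeta\Bigr),
$$
and relate this to $a P_\lambda(ab)\,b$ or $b P_\lambda(ab) a$ scaled appropriately. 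The cleanest route: show the maps $a\colon \operatorname{ran}P_\lambda(ab)\to\operatorname{ran}P_\lambda(ba)$ and $b\colon \operatorname{ran}P_\lambda(ba)\to\operatorname{ran}P_\lambda(ab)$ are well-defined (they intertwine $ab$ and $ba$) and their composites are $ba$ and $ab$ restricted to the respective finite-dimensional root subspaces. On those subspaces $ab-\lambda$ and $ba-\lambda$ are nilpotent, so $ab$ and $ba$ are invertible there; hence $a$ and $b$ restricted as above are inverse to each other up to the invertible factor $(ab)|$ resp.\ $(ba)|$, forcing the two root subspaces to have equal dimension. This gives $\operatorname{mult}_{ab}(\lambda)=\operatorname{mult}_{ba}(\lambda)$ for every $\lambda\neq 0$, which is exactly $\Lambda(ab)=\Lambda(ba)$.

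\textbf{Expected main obstacle.} The spectrum-as-sets equality and the compactness bookkeeping are routine; the delicate point is verifying that $a$ and $b$ genuinely restrict to \emph{bijections} between the root subspaces $\operatorname{ran}P_\lambda(ab)$ and $\operatorname{ran}P_\lambda(ba)$ — i.e.\ that no dimension is lost through the kernel of $a$ or $b$. This is where invertibility of $ab$ (equivalently $ba$) on the root subspace is essential: if $x\in\operatorname{ran}P_\lambda(ab)$ with $ax=0$ then $abx=0$, but $ab$ is invertible on that subspace, so $x=0$; symmetrically for $b$. Making this nilpotency-plus-invertibility argument precise, and checking carefully that $a P_\lambda(ab)=P_\lambda(ba)a$ (so that $a$ does map one root subspace into the other), is the heart of the proof. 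One could alternatively cite Theorem~3.10.3 of \cite{BirmanSolomjak} directly, as the paper does, but the above is the self-contained argument I would write out.
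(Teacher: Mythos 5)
The paper does not actually prove this theorem: it cites it as a particular case of Theorem~3.10.3 in \cite{BirmanSolomjak}. Your self-contained argument is therefore a genuine alternative, and its overall plan (reduce to equality of nonzero spectra as sets, then match algebraic multiplicities by exhibiting intertwining maps between the finite-dimensional root subspaces and using invertibility of the restriction of $ab$, resp.\ $ba$, at a nonzero $\lambda$) is the standard correct route.

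However, there is a concrete slip in the intertwining directions. The identities are $b(ab-\lambda)=(ba-\lambda)b$ and $a(ba-\lambda)=(ab-\lambda)a$, so it is $b$ that carries $\operatorname{ran}P_\lambda(ab)=\ker(ab-\lambda)^N$ into $\operatorname{ran}P_\lambda(ba)$, and $a$ that carries $\operatorname{ran}P_\lambda(ba)$ into $\operatorname{ran}P_\lambda(ab)$ --- the opposite of what you wrote. The injectivity step inherits the same confusion: from $x\in\operatorname{ran}P_\lambda(ab)$ and $ax=0$ one cannot deduce $abx=0$, since $abx=a(bx)$ has nothing to do with $ax$. The correct version is: if $x\in\operatorname{ran}P_\lambda(ab)$ and $bx=0$, then $(ab)x=a(bx)=0$, and since $ab$ restricted to $\operatorname{ran}P_\lambda(ab)$ equals $\lambda I$ plus a nilpotent (so is invertible for $\lambda\neq0$), we get $x=0$; hence $b$ is injective on $\operatorname{ran}P_\lambda(ab)$, and symmetrically $a$ is injective on $\operatorname{ran}P_\lambda(ba)$, giving equality of dimensions. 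With the roles of $a$ and $b$ swapped in this way your argument is correct and complete; as written, the key step does not go through.
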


The assertion of the previous theorem fails without the assumption of compactness.

\begin{ex} There exist bounded operators $a,b\in B(H)$ such that $ab$ is compact while $ba$ is not.
\end{ex}
\begin{proof} Fix an infinite projection $p$ such that $1-p$ is also an infinite projection. Thus, projections $p$ and $1-p$ are equivalent in $B(H).$ Select a partial isometry $u$ such that $uu^*=p$ and $u^*u=1-p.$ We have $|up|^2=pu^*\cdot up=p(1-p)p=0$. Hence, $up=0$ and $pu=(uu^*)u=u(1-p)=u-up=u.$ Setting $a=u$ and $b=p,$ we are done.
\end{proof}

The following fundamental result will first appear in \cite{KLPS},
though it is essentially proved in \cite{Kalton}.

\begin{thm}\label{Kalton} Let $\mathcal{E}$ be a symmetric operator ideal. For every $a\in \mathcal{E}$ and for every trace
$\varphi$ on $\mathcal{E}$, we have
$$\varphi(a)=\varphi({\rm diag}(\lambda(a))),$$
where ${\rm diag}(\lambda(a))$ is a diagonal matrix corresponding to any sequence $\lambda(a)\in \Lambda(a).$
\end{thm}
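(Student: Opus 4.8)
The plan is to reduce the trace identity $\varphi(a)=\varphi(\mathrm{diag}(\lambda(a)))$ to a statement about the ideal $\mathcal{E}$ modulo the commutator subspace, and then exploit the submajorization machinery developed in Section~2 to place $a-\mathrm{diag}(\lambda(a))$ inside the closure of commutators. First I would recall that a trace $\varphi$ on $\mathcal{E}$ vanishes on the commutator subspace $[\mathcal{M},\mathcal{E}]=\mathrm{span}\{xy-yx:\ x\in\mathcal{E},\ y\in B(H)\}$, so it suffices to show that $a-\mathrm{diag}(\lambda(a))$ lies in (the norm closure of, or more precisely a suitable submajorization-closed hull of) $[\mathcal{M},\mathcal{E}]$. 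The strategy follows Kalton's approach: pass to the Schur (upper/lower triangular) form of $a$ with respect to a basis adapted to a maximal nest of invariant subspaces, observe that the diagonal of this triangular form is exactly a sequence in $\Lambda(a)$ (this is where Theorem~\ref{spectral lemma}-type reasoning and the standard triangular truncation are invoked), and then argue that the strictly-triangular part is a limit of commutators.

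The key steps in order would be: (1) Record the elementary fact that any trace $\varphi$ annihilates $[\mathcal{M},\mathcal{E}]$, hence factors through $\mathcal{E}/\overline{[\mathcal{M},\mathcal{E}]}$ for an appropriate closure; here one must be careful about which topology — the natural one is the $\mathcal{E}$-norm, but for general symmetric ideals one works with the "commutator subspace" in the sense of Kalton/Dykema--Figiel--Weiss--Wodzicki, characterised via Cesàro-type averaging conditions on singular values. (2) Reduce $a$ to triangular form: choose an orthonormal basis $\{e_n\}$ such that the operator matrix $(a_{ij})=(\langle ae_j,e_i\rangle)$ is, say, upper triangular with diagonal entries forming a sequence $\lambda(a)\in\Lambda(a)$ — this uses that $a\in\mathcal{E}$ is compact and a Schur-basis / Ringrose argument. (3) Write $a-\mathrm{diag}(\lambda(a))=\sum_{k\ge 1} a^{(k)}$ where $a^{(k)}$ is the $k$-th superdiagonal block, and realise each $a^{(k)}$, or finite sums thereof, as explicit commutators $[x_k,s]$ with a shift-type operator $s\in B(H)$ and $x_k\in\mathcal{E}$ whose singular values are controlled (via \eqref{eq1}) by those of $a$. (4) Establish convergence of the triangular truncation: the partial sums of the superdiagonals converge to $a-\mathrm{diag}(\lambda(a))$ in the relevant sense, using that the triangular truncation is bounded on fully symmetric ideals or, more generally, using the submajorization bounds from Section~2 to show the tails are submajorised by vanishing rearrangements. (5) Combine: $\varphi(a)-\varphi(\mathrm{diag}(\lambda(a)))=\varphi(a-\mathrm{diag}(\lambda(a)))=\lim\varphi(\text{commutators})=0$.

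The main obstacle I anticipate is Step~(4): triangular truncation is \emph{not} bounded on $\mathcal{E}$ for general symmetric ideals (the classical failure of triangular truncation on $\mathcal{S}_1$ is the prototype), so one cannot simply say "sum the superdiagonals". This is precisely the subtle point in Kalton's work, and the resolution is to avoid truncating $a$ itself and instead to show directly that $a-\mathrm{diag}(\lambda(a))$ is a commutator by a single clever choice of basis together with a Hadamard/Schur-multiplier estimate, or by a limiting argument in the weaker quasi-norm topology in which the diagonal-restriction map \emph{is} continuous. I would therefore expect to cite \cite{Kalton} for the delicate part and present here only the reduction steps (1)--(3) and (5) in detail, indicating that (4) follows from the cited results; this matches the phrasing in the excerpt that the theorem "is essentially proved in \cite{Kalton}" and will first appear in full in \cite{KLPS}. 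A secondary, more minor obstacle is the well-definedness of $\varphi(\mathrm{diag}(\lambda(a)))$ independently of the choice of $\lambda(a)\in\Lambda(a)$ — but since any two such sequences are rearrangements of one another up to zero entries, and $\mathrm{diag}$ of a rearrangement differs by a unitary conjugation (a commutator-type perturbation), this is immediate from Step~(1).
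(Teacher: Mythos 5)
The paper does not prove this theorem at all: the text immediately preceding it reads ``The following fundamental result will first appear in \cite{KLPS}, though it is essentially proved in \cite{Kalton},'' and the statement is then recorded without argument. Your proposal recognizes precisely this and, correctly, defers the delicate technical core to the same references while sketching the expected line of argument. In that sense your treatment matches the paper's. Your sketch is a sound description of the Kalton/Dykema--Figiel--Weiss--Wodzicki strategy: a trace annihilates the commutator subspace $[\mathcal{E},B(H)]$ by its defining property, and the real content is that $a-{\rm diag}(\lambda(a))$ lies in that subspace, which one accesses through a Schur triangular form for the compact operator $a$ together with a characterization of $[\mathcal{E},B(H)]$ via Ces\`aro conditions on eigenvalue sequences. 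You also correctly single out the crux: triangular truncation is unbounded on general symmetric ideals (already on the trace class), so one cannot naively sum superdiagonals; this is exactly the obstruction Kalton's paper is designed to circumvent, and it is right not to pretend to supply that step here. Your closing remark on well-definedness of $\varphi({\rm diag}(\lambda(a)))$ over the choice of $\lambda(a)\in\Lambda(a)$ is also appropriate, since two enumerations in $\Lambda(a)$ give diagonal operators that differ by a permutation-unitary conjugation modulo a zero operator, hence are equal modulo commutators. In short, there is no genuine gap: you have, as the paper itself does, reduced the statement to the cited results, and your outline of what those results contain is accurate in its essentials.
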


The following theorem is the main result of this section. It is new even in the case when $\varphi$ is the standard trace ${\rm Tr}$ on $B(H).$ For the special cases of this theorem for $\theta=1/2,$ we refer to \cite{DDP3,FK,kosaki}.

\begin{thm} Let $\mathcal{E}$ be a fully symmetric operator ideal and let $a,b\in B(H),$ $b\ge 0,$ be such that $a$ is compact and $ab,ba\in \mathcal{E}$. For every trace $\varphi$ on
$\mathcal{E}$, we have
$$\varphi(ab)=\varphi(ba)=\varphi(b^{1-\theta}ab^{\theta}).$$
\end{thm}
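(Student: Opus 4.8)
The plan is to deduce the trace equalities from Theorem \ref{Kalton} together with the spectral identity of Theorem \ref{spectral lemma} and the submajorization bounds already established. First I would observe that since $a$ is compact and $b\in B(H)$, both $ab$ and $ba$ are compact, so Theorem \ref{spectral lemma} applies and yields $\Lambda(ab)=\Lambda(ba)$; hence, by Theorem \ref{Kalton}, $\varphi(ab)=\varphi(\mathrm{diag}(\lambda(ab)))=\varphi(\mathrm{diag}(\lambda(ba)))=\varphi(ba)$ for any trace $\varphi$ on $\mathcal{E}$. This disposes of the first equality. Note also that $b^{1-\theta}ab^{\theta}\in\mathcal{E}$: by Corollary \ref{secondcor}(i) (with $b_0=b_1=b$, and $\theta$ replaced by $1-\theta$), $b^{1-\theta}ab^{\theta}\prec\prec\max\{\mu(ab),\mu(ba)\}$, and $\mathcal{E}$ being fully symmetric and containing both $ab$ and $ba$ contains $b^{1-\theta}ab^{\theta}$ as well, so $\varphi(b^{1-\theta}ab^{\theta})$ is defined.

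The substantive step is to show $\varphi(b^{1-\theta}ab^{\theta})=\varphi(ba)$. The natural route is again via Theorem \ref{Kalton}: it suffices to prove that $b^{1-\theta}ab^{\theta}$ has the same sequence of eigenvalues (with multiplicities) as $ba$, i.e. $\Lambda(b^{1-\theta}ab^{\theta})=\Lambda(ba)$. For $b$ invertible this is immediate from a similarity: $b^{1-\theta}ab^{\theta}=b^{1-\theta}(ab)b^{\theta-1}$, a conjugation of the compact operator $ab$ by the bounded invertible operator $b^{1-\theta}$, so the two operators have identical spectra with multiplicities; then Theorem \ref{Kalton} gives $\varphi(b^{1-\theta}ab^{\theta})=\varphi(ab)$, which combined with the first paragraph finishes the invertible case. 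For general $b\ge 0$ I would pass to $b_n:=b+\tfrac1n$, which is invertible, apply the invertible case to get $\varphi(b_n^{1-\theta}ab_n^{\theta})=\varphi(ab)$, and then let $n\to\infty$.

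The hard part will be justifying this limit, since a trace $\varphi$ on a symmetric ideal need not be continuous in any obvious norm topology. What one does have is the submajorization machinery: $b_n^{1-\theta}ab_n^{\theta}\to b^{1-\theta}ab^{\theta}$ in measure (indeed in $\|\cdot\|$-norm once $a$ is compact, arguing as in the proof of Theorem \ref{bik th} via continuity of $t\mapsto t^{\theta}$), and the differences are uniformly dominated in submajorization by $\max\{\mu(ab),\mu(ba)\}\in\mathcal{E}$. The cleanest argument avoids continuity of $\varphi$ altogether: instead of taking limits at the level of $\varphi$, take limits at the level of eigenvalue sequences. One shows directly that $\Lambda(b^{1-\theta}ab^{\theta})=\Lambda(ab)$ for every $b\ge 0$ by approximating $b$ by $b+\tfrac1n$ and using that eigenvalue sequences (arranged in decreasing order of modulus) of a convergent-in-norm sequence of compact operators converge, which follows from the minimax/Ky Fan characterization together with $\|b_n^{1-\theta}ab_n^{\theta}-b^{1-\theta}ab^{\theta}\|\to 0$. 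Once $\Lambda(b^{1-\theta}ab^{\theta})=\Lambda(ab)=\Lambda(ba)$ is established for all $b\ge 0$, Theorem \ref{Kalton} applied three times gives $\varphi(b^{1-\theta}ab^{\theta})=\varphi(\mathrm{diag}(\lambda(ab)))=\varphi(ab)=\varphi(ba)$, completing the proof. I expect the eigenvalue-convergence lemma to be the only place requiring real care; everything else is bookkeeping with the results quoted above.
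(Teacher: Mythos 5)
Your overall strategy is the same as the paper's: reduce the trace identities to the spectral identity $\Lambda(ab)=\Lambda(ba)=\Lambda(b^{1-\theta}ab^{\theta})$ via Theorem \ref{Kalton}, and use the submajorization bound from Theorem \ref{bik th} to ensure $b^{1-\theta}ab^{\theta}\in\mathcal{E}$. However, you take a needlessly complicated detour for the crucial equality $\Lambda(b^{1-\theta}ab^{\theta})=\Lambda(ba)$. The paper obtains it in one stroke by applying Theorem \ref{spectral lemma} once more, to the alternative grouping $b^{1-\theta}ab^{\theta}=(b^{1-\theta}a)\cdot b^{\theta}$: since $b^{1-\theta}a$ is compact, $\Lambda\bigl((b^{1-\theta}a)b^{\theta}\bigr)=\Lambda\bigl(b^{\theta}(b^{1-\theta}a)\bigr)=\Lambda(ba)$, with no assumption of invertibility and no approximation whatsoever. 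Your route instead treats the invertible case by similarity ($b^{1-\theta}ab^{\theta}=b^{1-\theta}(ab)(b^{1-\theta})^{-1}$, which is fine) and then attempts to pass to general $b\geq 0$ via $b_n:=b+\tfrac1n$ and a limit at the level of eigenvalue sequences. You correctly flag that this limiting step is the delicate point: the claim that eigenvalue sequences of norm-convergent compact operators converge in the required sense is true but requires Riesz-projection arguments and careful bookkeeping around degeneracies and zero eigenvalues, and a trace $\varphi$ on $\mathcal{E}$ may be singular so one cannot interchange $\varphi$ with the limit naively. All of this effort and all of its attendant risk are avoidable: the observation that $b^{1-\theta}a$ is compact, not just $a$, lets Theorem \ref{spectral lemma} do the entire job. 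You had all the ingredients; you simply did not notice the right factorization.
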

\begin{proof} By Theorem \ref{bik th}, we have
$b^{1-\theta}ab^{\theta}\prec\prec\max\{\mu(ab),\mu(ba)\}$ and, therefore, $b^{1-\theta}ab^{\theta}\in\mathcal{E}.$ By Theorem
\ref{spectral lemma}, we have
$$\Lambda(ab)=\Lambda(ba)=\Lambda(b^{1-\theta}ab^{\theta}).$$
The assertion follows now from Theorem \ref{Kalton}.
\end{proof}


\begin{thebibliography}{100}
\bibitem{bik1} A. M. Bikchentaev, {\it Majorization for products of measurable operators,}
 Internat. J. Theoret. Phys. {\bf 37} (1998), no. 1, 571--576.
\bibitem{bik2} A. M. Bikchentaev, {\it On a property of $L_p-$spaces on semifinite von Neumann algebras,} (Russian) Mat. Zametki {\bf 64} (1998), no. 2, 185--190. English translation in Math. Notes {\bf 64} (1998), no. 1-2, 159--163.
\bibitem{bik3} A. M. Bikchentaev, {\it Block projection operator on normed solid spaces of measurable operators,} (Russian) Izv. Vyssh. Uchebn. Zaved. Mat. (2012), No. 2, 86--91. English translation in Russian Math. (Iz. VUZ) {\bf 56} (2012), no. 2, 75-79.
\bibitem{BirmanSolomjak} M. Birman, M. Solomjak {\it Spectral theory of selfadjoint operators in Hilbert space,} Mathematics and its Applications (Soviet Series). D. Reidel Publishing Co., Dordrecht, 1987.
\bibitem{cps} A. L. Carey, D. S. Potapov, F. A. Sukochev, {\it Spectral flow is the integral of one forms on Banach manifolds of self adjoint Fredholm operators,} Adv. Math, {\bf 222} (2009), 1809--1849.
\bibitem {CS} V.I. Chilin, F.A.Sukochev, {\it Weak convergence in non-commutative symmetric spaces,} J. Operator Theory {\bf 31} (1994), 35-65.
\bibitem{Dix} J. Dixmier, {\it Les algebres d'operateurs dans l'espace hilbertien}, 2 edition, Gauthier - Villars, Paris, 1969.
\bibitem{DDP0} P.G. Dodds, T.K. Dodds, B. de Pagter, \textit{Non-commutative Banach  function
spaces}, Math. Z.  {\bf 201} (1989), 583--597.
\bibitem {DDP3} P.G. Dodds, T.K. Dodds, B. de Pagter, {\it Non-commutative  K\"othe duality}, Trans. Amer. Math. Soc. {\bf 339} (1993), 717-750.
\bibitem{DPS} P. Dodds, B. de Pagter, F. Sukochev {\it Theory of noncommutative integration,} unpublished manuscript.
\bibitem{FK} T. Fack, H. Kosaki, {\it Generalized $s-$numbers of $\tau-$measurable operators,} Pacific J. Math. {\bf 123} (1986), no. 2, 269--300.
\bibitem{GK1} I. Gohberg, M. Krein, {\it Introduction to the theory of linear nonselfadjoint operators,} Translations of Mathematical Monographs, Vol. 18 American Mathematical Society, Providence, R.I. 1969.
\bibitem{GK2} I. Gohberg, M. Krein, {\it Theory and applications of Volterra operators in Hilbert space,} Translations of Mathematical Monographs, Vol. 24 American Mathematical Society, Providence, R.I. 1970.
 \bibitem{Kalton} N. Kalton, {\it Spectral characterization of sums of commutators. I,}
J. Reine Angew. Math. {\bf 504} (1998), 115--125.
\bibitem{KLPS} N. Kalton, S. Lord, D. Potapov, F. Sukochev, {\it Traces on compact operators and
Connes trace theorem,} preprint, 2011.
\bibitem{KS} N. Kalton, F. Sukochev, {\it Symmetric norms and spaces of operators,} J. Reine Angew. Math. {\bf 621} (2008) 81--121.
\bibitem{KR1} R. Kadison, J. Ringrose, {\it Fundamentals of the Theory of Operator Algebras I}, Academic Press, Orlando, 1983.
\bibitem{KR2} R. Kadison, J. Ringrose, {\it Fundamentals of the Theory of Operator Algebras II}, Academic Press, Orlando, 1986.
\bibitem{kosaki} H. Kosaki, {\it Arithmetic-geometric mean and related inequalities for operators,} J. Funct. Anal. {\bf 156} (1998), no. 2, 429--451.
\bibitem{KPS} S.G. Krein, Ju.I. Petunin, E.M. Semenov, {\it Interpolation of linear operators}, Translations of Mathematical Monographs, Amer. Math. Soc. {\bf 54} (1982).
\bibitem{Ne} E. Nelson, {\it Notes on non-commutative integration,} J. Funct. Anal. {\bf 15} (1974), 103-116.
\bibitem{ovc1} V. Ovchinnikov, {\it The $s-$numbers of measurable operators,} Funkcional. Anal. i Prilozhen. {\bf 4} (1970) no. 3, 78--85 (Russian). English translation in Funct. Anal. Appl. {\bf 4} (1970) no. 3, 236--242.
\bibitem{PS} B. de Pagter, F. Sukochev, {\it Commutator estimates and $\mathbf{R}$-flows in non-commutative symmetric spaces}, Proc. Edinb. Math. Soc. {\bf 50} (2007), 293--324.
\bibitem{ps} D. Potapov, F. Sukochev, {\it Unbounded Fredholm modules and double operator integrals,} J. Reine Angew. Math., {\bf 626} (2009), 159--185.
\bibitem{rs} M. Reed, B. Simon, {\it Methods of modern mathematical physics. Volume 2: Fourier analysis, self-adjointness,} Elsevier, 1975.
\bibitem{Schatten} R. Schatten, {\it Norm ideals of completely continuous operators,} Second printing. Ergebnisse der Mathematik und ihrer Grenzgebiete, Band {\bf 27} Springer-Verlag, Berlin-New York 1970.
\bibitem{Se} I. Segal, {\it A non-commutative extension of abstract integration}, Ann. Math. {\bf 57} (1953), 401--457.
\bibitem{Simon} B. Simon, {\it Trace ideals and their applications,} Second edition. Mathematical Surveys and Monographs, {\bf 120}. American Mathematical Society, Providence, RI, 2005.
\bibitem{SC} F. Sukochev, V. Chilin, {\it Symmetric spaces over semifinite von Neumann algebras,} Dokl. Akad. Nauk SSSR  {\bf 313}  (1990),  no. 4, 811--815 (Russian). English translation: Soviet Math. Dokl. {\bf 42} (1991) 97--101.
\bibitem{S} F. Sukochev, {\it On the A. M. Bikchentaev conjecture,} (Russian) Izv. Vyssh. Uchebn. Zaved. Mat., (2012), no.6,  67--70. 
\bibitem{SZcrelle} F. Sukochev, D. Zanin, {\it Traces on symmetrically normed operator ideals,} J. Reine Angew. Math. (to appear).
\bibitem{Tak} M. Takesaki, {\it Theory of operator algebras I}, Springer-Verlag, New York, 1979.
\bibitem{Ta2} M. Takesaki, {\it Theory of Operator Algebras II}, Springer-Verlag, Berlin-Heidelberg-New York, 2003.
\bibitem{Tik} O.E. Tikhonov, {\it Continuity of operator functions in topologies connected with a trace on a von Neumann algebra.} (Russian) Izv. Vyssh. Uchebn. Zaved. Mat. (1987), no. 1, 77--79. English translation: Soviet Math. (Iz. VUZ) {\bf 31}  (1987),  no. 1, 110--114.
\end{thebibliography}
\end{document}